\newtheorem{lemma}{Lemma}
\newtheorem{theorem}{Theorem}
\newtheorem{corollary}{Corollary}
\newtheorem{proposition}{Proposition}
\newtheorem{fact}{Fact}
\newtheorem{remark}{Remark}
\newtheorem{definition}{Definition}
\newtheorem{observation}{Observation}
\newtheorem{assumption}{Assumption}
\newcommand{\OPT}{\textup{OPT}}
\newcommand{\OPTn}{\overline{\textup{OPT}}}
\newcommand{\ip}[2]{\langle #1, #2\rangle}
\newcommand{\I}{\mathcal{I}}
\title{A Theoretical and Computational Analysis  of Full Strong-Branching}
\author[1]{Santanu S. Dey\thanks{santanu.dey@isye.gatech.edu}}
\author[1]{Yatharth Dubey\thanks{yatharthdubey7@gatech.edu}}
\author[2]{Marco Molinaro\thanks{molinaro@inf.puc-rio.br}}
\author[1]{Prachi Shah\thanks{prachi.shah@gatech.edu}}
\affil[1]{School of Industrial and Systems Engineering, Georgia Institute of Technology}
\affil[2]{Computer Science Department, Pontifical Catholic University of Rio de Janeiro}
\date{\today}
\begin{document}
\maketitle
\begin{abstract}
Full strong-branching (henceforth referred to as strong-branching) is a well-known variable selection rule that is known experimentally to produce significantly smaller branch-and-bound trees in comparison to all other known variable selection rules. In this  paper, we attempt an analysis of the performance of the strong-branching rule both from a theoretical and a computational perspective. On the positive side for strong-branching we identify vertex cover as a class of instances where this rule provably works well. In particular, for vertex cover we present an upper bound on the size of the branch-and-bound tree using strong-branching as a function of the additive integrality gap, show how the Nemhauser-Trotter property of persistency which can be used as a pre-solve technique for vertex cover is being recursively and consistently used through-out the strong-branching based branch-and-bound tree, and finally provide an example of a vertex cover instance where not using strong-branching leads to a tree that has at least exponentially more nodes than the branch-and-bound tree based on strong-branching. On the negative side for strong-branching, we identify another class of instances where strong-branching based branch-and-bound tree has exponentially larger tree in comparison to another branch-and-bound tree for solving these instances. On the computational side, we conduct experiments on various types of instances like the lot-sizing problem and its variants, packing integer programs (IP), covering IPs,  chance constrained IPs, vertex cover, etc., to understand how much larger is the size of the strong-branching based branch-and-bound tree in comparison to the optimal branch-and-bound tree. The main take-away from these experiments is that for all these instances, the size of the strong-branching based branch-and-bound tree is within a factor of two of the size of the optimal branch-and-bound tree. 
\end{abstract}


\section{Introduction} \label{sec:intro}
The branch-and-bound scheme, invented by Land and Doig~\cite{land1960automatic}, is the method of choice for solving mixed integer linear programs (MILP) by all modern state-of-the-art  MILP solvers. 

We present a quick outline of this well-known method for binary MILPs below; see ~\cite{wolsey1999integer,conforti2014integer} for more discussion on the branch-and-bound method. The optimal objective function value of the linear programming (LP) relaxation of a given MILP provides a dual bound (upper/lower bound for maximization-type/minimization-type objective respectively) on the optimal objective function value of the MILP. This LP relaxation corresponds to the root node of the branch-and-bound tree. In order to improve this bound and to find feasible solutions, after solving the LP corresponding to a node, the feasible region of the LP  is partitioned into two sub-problems which correspond to the child nodes of the given node. Such a partitioning of the feasible region for binary MILPs is usually accomplished in practice by selecting a variable $x_j$ that is currently fractional and adding the constraint $x_j =0$ to one of the child nodes and the constraint $x_j =1$ to the other child node. The process of partitioning the feasible regions of the LP at a node continues recursively for the child nodes (thus forming a tree) and is stopped (sometimes referred to as pruning a node), if one of the following conditions hold: (i) the LP at the node is infeasible, (ii) the LP's optimal solution is integer feasible, or (iii) the LP's optimal objective function value is worse than an already known integer feasible solution. The procedure terminates when all nodes have been pruned. 

Given an underlying LP solver, formally speaking, the \emph{branch-and-bound algorithm} is well-defined by fixing two rules:
\begin{itemize}
\item Rule for selecting an open node to be branched on next and,
\item Rule for deciding the variable to branch on.
\end{itemize} 
It is natural to measure the efficiency of a branch-and-bound algorithm by the number of nodes (corresponding to number of LPs solved) in the tree, i.e., lesser the number of nodes, faster the algorithm. 

\paragraph{Selecting an open node to branch on next (node selection rule).}
It is well established~\cite{wolsey1999integer} that the \emph{worst-bound rule} (i.e., select node with the maximum LP optimal objective function value for a maximization-type MILP or select node with minimum LP optimal objective function value for a minimization-type MILP) for selecting the next node to branch on, leads to small branch-and-bound trees. The intuition behind this is the following: one cannot ignore the node with worst-bound if one wants to solve the MILP. Thus, it is best to select to branch on it first.  In the rest of the paper, we always assume to use the worst-bound rule.

\paragraph{Deciding which variable to branch on (variable selection rule).} Given that the rule for selecting the node to be branched on is well-understood, much of the research in the area of branch-and-bound algorithms has focused on the topic of deciding the variable to branch on -- see for example~\cite{healy1964multiple,dakin1965tree,driebeek1966algorithm, benichou1971experiments,mitra1973investigation,forrest1974practical,breu1974branch,gauthier1977experiments,land1979computer,eckstein1994parallel,applegate1995finding,linderoth1999computational,achterberg2005branching}. Most of the above work develops various intricate greedy rules for determining the branching variable. 
A popular concept is that of \emph{pseudocost branching}: the value of pseudocost (variable with largest pseudocost gets branched on) keeps a history of the success (in terms of improving dual bound) of the variables on which branching has already been done. Many of the papers cited above differ in how pseudocost is initialized and updated during the course of the branch-and-bound tree. Other successful methods like \emph{hybrid branching} and \emph{reliability branching}~\cite{achterberg2005branching} are combinations of pseudocost branching and \emph{full strong-branching}, that we discuss next.

The focus of this work is \emph{full strong-branching}~\cite{applegate1995finding}, henceforth referred to as strong-branching for simplicity. This rule works as follows: branching on all the current fractional variables is computed (i.e., the child nodes are solved for every choice of fractional variable) and improvement measured in the left and right child node. Branching is now done on the variable with the most `combined improvement', where the combined improvement is computed as a `score' function of the left and right improvement.
Formally, let $z$ be the optimal objective function value of the LP at a given node, and let $z^{0}_j$ and $z^{1}_j$ be the optimal objective function values of the LPs corresponding to the child nodes where the variable $x_j$ is set to $0$ and $1$ respectively; we define $\Delta^{+}_j := z - z^1_j$ and $\Delta^{-}_j := z - z^0_j$ (assuming the MILP's objective function is of maximizing-type). Note that  $\Delta^+_j = +\infty$ is the child node with $x_j$ set to $0$ is infeasible. Similarly for $\Delta^-_j$. Two common  score functions used  are:
$$\textup{score}_{L}(j) = (1 - \mu)\cdot \textup{min} \{\Delta^{-}_j , \Delta^{+}_j \} + \mu \cdot \textup{max} \{\Delta^{-}_j , \Delta^{+}_j \}$$
for a constant $\mu \in [0,1]$, and
$$\text{score}_{P}(j) = \max(\Delta^+_j, \epsilon) \cdot \max(\Delta^-_j, \epsilon),$$
for a constant $\epsilon > 0$, where the first score is recommended in~\cite{linderoth1999computational,achterberg2005branching} (the paper~\cite{achterberg2005branching} recommends using $\mu  = 1/6$) and the second score function is recommended in~\cite{achterberg2007constraint}, where $\epsilon >0$ is chosen close to $0$ (for example, $\epsilon = 10^{-6}$) to break ties. We will refer to the first score function as the \textit{linear score function} and the second score function as the \textit{product score function}. Finally, the variable selected to branch on belongs to the set:
$$\arg \max_{j} \{\textup{score}(j)\}.$$
Empirically, strong-branching is well-known to produce significantly smaller branch-and-bound trees~\cite{achterberg2005branching} compared to all other known techniques, but is extremely expensive to implement as one has to solve $2K$ LPs where $K$ is the number of fractional variables for making just one branching decision. This experimentally observed fact is so well established in the literature that almost all recent methods to improve upon branching decisions are based on using \emph{machine learning techniques to mimic strong-branching} that avoid solving the $2K$ LPs, see for example~\cite{khalil2016learning,lodi2017learning,alvarez2017machine,balcan2018learning,gasse2019exact,gupta2020hybrid,nair2020solving}. Finally, see~\cite{le2017abstract} that describes a more sophisticated way to decide the branching variable based on left and right improvement rather than a static `score' function.

\subsection{Our contributions}

As explained in the previous section, empirically it is well understood that strong-branching produces very small trees in comparison to other rules. However, to the best of our knowledge there is \emph{no understanding of how good strong-branching is in absolute terms.} In particular, we would like to answer questions such as: 
\begin{itemize}
\item How large is the tree produced by strong-branching in comparison to the smallest possible branch-and-bound tree for a given instance? Answering this question may lead us to finding better rules. 
\item A more refined line of questioning: Intuitively, we do not expect strong-branching to work well for all types of MILP models and instances. On the other hand, it may be possible that for some classes of MILPs strong-branching based branch-and-bound tree may be quite close to the smallest possible branch-and-bound tree. It would be, therefore, very useful to understand the performance of strong-branching vis-\'a-vis different classes of instances.    
\end{itemize}
In this paper, we attempt an analysis of the performance of the strong-branching rule -- both from a theoretical and a computational perspective, keeping in mind the above questions.  
\begin{itemize}
\item Strong branching is provably good: We show that for the vertex cover problem, the strong-branching rule has several benefits. First, we present a fixed parameter-type (FPT) result that uses the additive gap between the IP's and the LP's optimal objective function value to bound the size of the branch-and-bound tree using strong-branching. 

Nemhauser and Trotter~\cite{nemhauser1975vertex} proved that one may fix variables that are integral in the optimal solution of the LP relaxation of vertex cover and still find an optimal solution to the IP. Note that in the branch-and-bound tree, every node corresponds to a sub-graph of the original vertex-cover instance, and thus, ideally we would like to continue to use the Nemhauser-Trotter property at each node. Instead of designing a specialized implementation of branch-and-bound algorithm (where we fix variables that have an integer value in the LP optimal solution at each node), our second result is to show that strong-branching naturally incorporates ``fixing" the integral variables of the LP solution recursively and consistently thoughout the branch-and-bound tree. 

Finally, we construct an instance where strong-branching yields a branch-and-bound tree that is exponential-times smaller than a branch-and-bound tree generated using a very reasonable alternative variable selection rule. 
\item Strong branching is provably bad: We present a class of instances where the size of the strong-branching based branch-and-bound tree is exponentially larger than a special branch-and-bound tree that solves these instances. In fact, the result we prove is stronger -- we show that if we only branch on variables that are fractional, then the size of the branch-and-bound tree is exponentially larger than the given special branch-and-bound tree to solve these instances. This special branch-and-bound tree branches on variables that are integral in the optimal LP solutions at certain nodes.
\item Computational evaluation of the size of strong-branching based branch-and-bound tree against the ``optimal" branch-and-bound tree\footnote{We write ``optimal" branch-and-bound tree with quotes, since the size of the optimal branch-and-bound tree depends not only on the branching decisions taken at each node, but also on the properties of the LP solver. We discuss this issue in detail in Section \ref{sec:main}.}: We first present a dynamic programming algorithm for generating the optimal branch-and-bound tree whose running time is $\text{poly}(\text{data}(\I)) \cdot 3^{O(n)}$ where $n$ is the number of binary variables. Then we conduct experiments on various types of instances like the lot-sizing problem and its variants, packing IPs, covering IPs,  chance constrainted IPs, vertex cover, etc., to understand how much larger is the size of the strong-branching based branch-and-bound tree  in comparison to the optimal branch-and-bound tree. The main take-away from these experiments is that for all these instances, the size of the strong-branching based branch-and-bound tree is within a factor of two of the size of the optimal branch-and-bound tree. 
\end{itemize}

To the best of our knowledge, this is the first such study of this kind on strong-branching, that provides a better understanding of why strong-branching often performs so well in practice, and gives insight into when an instance may be challenging for strong-branching. 

The rest of the paper is organized in the following fashion. In Section~\ref{sec:main} we formally present all our theoretical and computational results.  In Section~\ref{sec:vertexcoverproof} and Section~\ref{sec:bdg_ex}, we present proofs of the theoretical results presented in Section~\ref{sec:main}. In Section~\ref{sec:computing_opt} we present the details of the dynamic programming algorithm mentioned above. Finally, in Section~\ref{sec:experiments} we present all the details of our computational experiments.

\section{Main results}\label{sec:main}
\subsection{Theoretical results}\label{sec:theo}
Note that in this section (Section~\ref{sec:theo}), whenever we refer to strong-branching, we assume that it has been used in conjunction with the product score function~\cite{achterberg2007constraint} $\text{score}_{P}$, where $\epsilon = 0$ and we use the convention that $0\cdot \infty = 0$. Finally, we refer to the \textit{size} of a branch-and-bound tree to denote its number of nodes; we note that in any binary tree, the number of nodes is at most $2\ell + 1$, where $\ell$ is the number of leaves. 

\subsubsection{Strong branching works well for vertex cover}
There are simple MILPs that require exponential size branch-and-bound trees \cite{jeroslow1974trivial,chvatal1980hard,dey2021lower, jiang2021complexity,dadush2020complexity}. A common way to meaningfully analyze an algorithm with exponential worst-case performance is to show its performance with respect to some parameter \cite{roughgarden2019beyond, cygan2015parameterized}. Arguably the most well-studied problem in parameterized complexity is \textit{vertex cover}.
\begin{definition}[Vertex cover]\label{def:vc_ip}
The vertex cover problem over a graph $G = (V, E)$ can be expressed as the following integer program (IP)
\begin{equation*}
\begin{array}{ll@{}ll}
\text{min} & \displaystyle\sum\limits_{v \in V} x_v & \\
\text{s.t.}   &x_u + x_v \geq 1, & \quad uv \in E \\
              &x_v \in \{0,1\}, & \quad v \in V
\end{array}
\end{equation*}
Given an instance $\I$ of this IP, we let $L(\I)$ denote its LP relaxation (i.e. when the variable constraints instead are $x_v \in [0,1]$). We denote the optimal objective function value of an instance by $\OPT(\I)$ and the optimal objective function value of its LP relaxation by $\OPT(L(\I))$. We denote its additive integrality gap $\Gamma(\I) := \OPT(\I) - \OPT(L(\I))$. For results pertaining to vertex cover, we use $n$ to denote the number of vertices (i.e. $n := |V|$).
\end{definition}

\paragraph{Upper bound on the size of branch-and-bound tree using strong-branching.}

Let $\mathcal{T}_S(\I)$ represent a branch-and-bound tree for solving instance $\I$ using strong-branching. Unfortunately, the number of nodes in this tree depends not only on the node selection rule and variable selection rule, but also on the underlying LP solver as well. For example, a solver may report an integral solution at a given node and allow us to prune the node. Another solver might report a different optimal solution to the LP, which is not integral. Therefore, we will be careful to not refer to \emph{the branch-and-bound tree generated by strong-branching}. Instead, we will use $\mathcal{T}_S(\I)$ to represent \emph{some branch-and-bound tree generated by strong-branching}. As such, we conduct a parameterized analysis of strong-branching as an algorithm for vertex cover. 

We show an upper bound on strong-branching for vertex cover parameterized by its additive integrality gap $\Gamma(\I)$.
\begin{theorem}\label{thm:vc_fpt}
Let $\I$ be any instance of vertex cover. Assume we break ties within the worst-bound rule for node selection rule by selecting a node with the largest depth.  Let $\mathcal{T}_S(\I)$ be {some branch-and-bound tree generated by strong-branching} with the above version of worst-bound node selection rule that solves $\I$. Then independent of the underlying LP solver used, 
$$|\mathcal{T}_S (\I)| \leq  2^{2\Gamma(\I) + 2} + \mathcal{O}(n).$$
\end{theorem}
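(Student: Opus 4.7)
The plan is to exploit the well-known half-integrality of vertex-cover LPs (Nemhauser--Trotter), which is inherited by every sub-instance along the branch-and-bound tree: at each node, after fixing some variables, the residual problem is again a vertex-cover instance on a sub-graph, so its LP admits a half-integral optimum $x^*$. Let $V_{1/2} = \{v : x^*_v = 1/2\}$. A short exchange argument shows every $v \in V_{1/2}$ has at least one neighbor in $V_{1/2}$ (otherwise $x^*_v$ could be dropped to $0$). Consequently, setting $x_v = 1$ always raises the LP value by at least $1/2$, so $\Delta^+_v \geq 1/2$; while setting $x_v = 0$ forces each of the $d$ many $V_{1/2}$-neighbors to $1$, giving an LP increase of $(d-1)/2$. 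Since every LP value is half-integral, each $\Delta^\pm_v$ is a non-negative multiple of $1/2$, so $\Delta^-_v \geq 1/2$ precisely when $v$ has two or more $V_{1/2}$-neighbors.

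This yields a dichotomy at each internal node. In \emph{Case A}, some $v \in V_{1/2}$ has degree at least $2$ in the $V_{1/2}$-subgraph, and its product score $\Delta^+_v \cdot \Delta^-_v$ is strictly positive; strong-branching then picks a variable of positive score, and half-integrality forces both $\Delta^+$ and $\Delta^-$ of the chosen variable to be at least $1/2$, so both children's LP values exceed the parent's by at least $1/2$. In \emph{Case B}, the $V_{1/2}$-subgraph is a perfect matching, every product score is $0$, and the vertex-cover LP on a matching has value equal to its IP optimum; so the sub-instance at this node has integrality gap $0$ and its LP value equals its IP optimum.

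I would then bound the tree in two parts. The ``Case A backbone'' --- the maximal subtree rooted at the root whose internal nodes are all Case A --- has depth at most $2\Gamma(\I)$, because the LP value along any root-to-leaf path increases by at least $1/2$ per level and is bounded above by $\OPT(\I)$. Hence this subtree contains at most $2^{2\Gamma(\I)+2}$ nodes, and at each of its leaves the LP value equals $\OPT(\I)$. Beyond these leaves strong-branching enters Case B. Using the worst-bound $+$ largest-depth tie-breaking rule --- which turns the traversal into depth-first search --- I would argue that strong-branching descends a single Case B chain of length at most $n$ (each left child effectively fixes one variable in the spirit of Nemhauser--Trotter persistency) and reaches an integer feasible solution of value $\OPT(\I)$. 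Once this incumbent is recorded, every other Case B leaf of the backbone is pruned immediately by bound (its LP value equals the incumbent), while each right sibling along the Case B chain is pruned after one LP evaluation; these extra nodes total $O(n)$.

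The main obstacle I expect is making the ``only one Case B chain is fully explored'' part precise: a naive count would attach a Case B sub-tree of size $O(n)$ to every leaf of the Case A backbone, giving only the much weaker $O(n \cdot 2^{2\Gamma(\I)})$ bound. Pinning this down relies on (i) the fact established in the dichotomy that every Case A backbone leaf has LP value exactly $\OPT(\I)$, (ii) the depth-first traversal forced by the worst-bound $+$ depth tie-breaking rule, and (iii) bound-based pruning once an incumbent of value $\OPT(\I)$ is in hand. A secondary subtlety is the ``independent of the underlying LP solver'' clause: the structural claims --- half-integrality, existence of a $V_{1/2}$-neighbor, matching structure in Case B --- should be stated for the LP's feasible region rather than any particular solver's returned optimum, which is a matter of book-keeping but worth being careful about.
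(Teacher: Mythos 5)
Your overall architecture is the right one and matches the paper's: a backbone of depth at most $2\Gamma(\I)$ along which every branching increases both children's LP values by at least $\frac12$ (giving the $2^{2\Gamma(\I)+2}$ term), followed by an $\mathcal{O}(n)$ cleanup in which the worst-bound rule with depth tie-breaking ensures that only one ``solved'' subtree is actually expanded and the rest is pruned against the incumbent; you also correctly identify the $\mathcal{O}(n\cdot 2^{2\Gamma(\I)})$ trap and the way out of it. However, the key lemma driving the backbone --- your Case A / Case B dichotomy --- is false as stated, because it is keyed to the fractional support $V_{1/2}$ of \emph{one particular} optimal solution and to its degree structure. Counterexample: the diamond ($K_4$ minus an edge) used in Section~\ref{sec:sb_greedy_ex}. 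The all-$\frac12$ point is an optimal \emph{extreme} point of its LP, every vertex has at least two neighbors in $V_{1/2}$, so your Case A applies and predicts a strictly positive product score; yet, as computed in that section, $\text{score}_P(v)=0$ for every vertex of the diamond (e.g.\ fixing the degree-3 vertex to $1$ leaves the LP value unchanged). The same example refutes the preliminary claims ``$\Delta^+_v \geq \frac12$ for all $v \in V_{1/2}$'' and ``$\Delta^-_v \geq \frac12$ precisely when $v$ has two or more $V_{1/2}$-neighbors'' (your argument for the latter only exhibits a feasible solution in the child, which upper-bounds rather than lower-bounds the increase). Consequently there are nodes your scheme places in the backbone at which no variable has positive score, and the derived claim that every backbone leaf has LP value $\OPT(\I)$ collapses.

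The correct certificate --- and the one the paper's Lemma~\ref{lem:branch_frac} uses --- is not ``fractional in the returned optimum with at least two fractional neighbors'' but ``fractional in \emph{every} optimal LP solution,'' i.e.\ membership in the complement of the maximal set of integer variables $I(\I,N)$, which is well defined by Fact~\ref{fact:max_idx}. The dichotomy should be: either the node admits an integer optimal LP solution (then Lemma~\ref{lem:branch_int} finishes the subtree in $\mathcal{O}(n)$ nodes), or it does not, in which case $I(\I,N)\neq [n]$, any $v\notin I(\I,N)$ has both $\Delta^+_v,\Delta^-_v\geq \frac12$ (fixing $x_v$ destroys every optimum, and half-integrality rounds the strict increase up to $\frac12$), while every $u\in I(\I,N)$ has $\text{score}_P(u)=0$; hence the argmax has positive score and both of its children gain at least $\frac12$. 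This is also where the ``independent of the underlying LP solver'' clause is actually earned --- it is not book-keeping, since any argument phrased in terms of the solver's returned optimum, as yours is, is exactly what an adversarial solver can defeat.
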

Moreover, it is impossible to find another branch-and-bound rule that has a much better upper bound, so in this sense strong-branching is in the worst-case almost optimal for vertex cover parametrized by integrality gap. This is because of the following bad example.
%
\begin{remark}\label{rem:tight}
There is an instance $\I$ of vertex cover such that {any} branch-and-bound tree that solves $\I$ has size $2^{2\Gamma(\I) +1} -1$.  
\end{remark}
This is the instance of $m$ disjoint triangles presented in \cite{basu2020complexity}. Note that the smallest vertex cover in this instance has value $2m$ while the optimal solution to the LP relaxation has value $\frac{3}{2}m$, therefore $\Gamma(\I) = \frac{1}{2}m$. It follows from the discussion in \cite{basu2020complexity}, that all branch-and-bound trees for this instance have $2^m $ leaves, i.e., at least $2^{m +1} - 1$ nodes. 

We note that the result of Theorem~\ref{thm:vc_fpt} matches the guarantee of the classic parameterized-complexity algorithm that uses bounded search trees tailored to this problem; see Theorem 3.8 of \cite{cygan2015parameterized}.

We present the proof of Theorem~\ref{thm:vc_fpt} in Section \ref{sec:fpt}. 

\paragraph{Strong branching and persistency.}

Nemhauser and Trotter~\cite{nemhauser1975vertex} prove the following property regarding the LP relaxation of vertex cover. 
\begin{fact}[Persistency; Theorem 2 of \cite{nemhauser1975vertex}]\label{fact:persistency}
Let $\I$ be an instance of vertex cover, $\hat{x}$ be an optimal solution of $L(\I)$ and $I$ be the set of variables on which $\hat{x}$ is integer (i.e. $I = \{j : \hat{x}_j \in \{0,1\} \}$). Then, there exists an optimal solution $\hat{y}$ to $\I$ such that $\hat{y}$ agrees with $\hat{x}$ on all of its integer components (i.e. $\hat{y}_j = \hat{x}_j$ for all $j \in I$). 
\end{fact}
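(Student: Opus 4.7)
The plan is to take any optimal IP solution $y^*$ and ``round'' it into a new integer solution $\hat{y}$ that agrees with $\hat{x}$ on every integer coordinate, then argue that this rounding preserves both feasibility and optimality. Concretely, I would partition the vertices as $V_0 = \{v : \hat{x}_v = 0\}$, $V_1 = \{v : \hat{x}_v = 1\}$, and $V_f = \{v : 0 < \hat{x}_v < 1\}$, and set $\hat{y}_v = 0$ on $V_0$, $\hat{y}_v = 1$ on $V_1$, and $\hat{y}_v = y^*_v$ on $V_f$. By construction $\hat{y}$ agrees with $\hat{x}$ on $I = V_0 \cup V_1$, so only feasibility and optimality remain to be checked.

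A single structural observation drives the feasibility argument: no edge can connect $V_0$ to $V_f$, since LP feasibility on such an edge would force the $V_f$-endpoint to have value at least $1$; similarly, $V_0$ must be independent. Feasibility of $\hat{y}$ then reduces to a short case analysis: edges inside $V_f$ are covered because $y^*$ is itself a vertex cover; edges with an endpoint in $V_1$ are covered by that endpoint; and the remaining edge types are ruled out above.

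The main obstacle is proving $\sum_v \hat{y}_v \leq \sum_v y^*_v$. Introducing $A = V_1 \cap \{v : y^*_v = 0\}$ and $B = V_0 \cap \{v : y^*_v = 1\}$, a one-line rearrangement reduces the inequality to $|A| \leq |B|$. To prove this comparison I would perturb $\hat{x}$ into a new LP solution $x'$ by subtracting a small $\epsilon > 0$ from each coordinate in $A$ and adding $\epsilon$ to each coordinate in $B$, leaving everything else fixed. The objective of $x'$ equals $\sum_v \hat{x}_v + \epsilon(|B| - |A|)$, so once $x'$ is shown to be LP-feasible, optimality of $\hat{x}$ immediately forces $|A| \leq |B|$.

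Verifying feasibility of $x'$ is the heart of the argument. The only delicate edges are those from $A \subseteq V_1$ into $V_0$, where lowering the $V_1$-endpoint from $1$ to $1-\epsilon$ threatens to push the cover constraint below $1$. Here feasibility of $y^*$ saves us: if $a \in A$ has $y^*_a = 0$, then every neighbor of $a$ must carry $y^*$-value $1$, so every neighbor of $a$ lying in $V_0$ must belong to $B$ and is being raised by $\epsilon$ simultaneously, exactly compensating. All other edge types are either unchanged, incident to an unperturbed $V_1$-vertex, or incident to a $V_f$-vertex with $\hat{x}$-value strictly between $0$ and $1$; each absorbs an $\epsilon$-perturbation provided $\epsilon < \min_{v \in V_f} \hat{x}_v$. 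This completes the proof.
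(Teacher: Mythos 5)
The paper does not prove this statement: it is imported verbatim as Fact~\ref{fact:persistency} with a citation to Theorem~2 of Nemhauser and Trotter, so there is no in-paper argument to compare against. Judged on its own, your proof is correct and complete. The construction of $\hat{y}$ (copy the integral part of $\hat{x}$, keep $y^*$ on the fractional support) is the right one; the observation that no edge joins $V_0$ to $V_0 \cup V_f$ settles feasibility; and the reduction of optimality to $|A| \leq |B|$ followed by the $\pm\epsilon$ perturbation of $\hat{x}$ is sound. The one case you leave implicit --- that no edge can have both endpoints in $A$, so the decrease on $A$ is never doubled on a single constraint --- is indeed ruled out by feasibility of $y^*$, exactly as your remark about neighbors of $a$ carrying $y^*$-value $1$ indicates. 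This is essentially the classical local-exchange argument for persistency (closely related to the proof of Lemma~1 of Picard--Queyranne that the paper does invoke elsewhere), so while it is a legitimate standalone proof, it does not represent a genuinely different route from the literature the paper relies on.
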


One way to use this property is to use it as a pre-solve routine, i.e., fix variables that are integral in the optimal solution of the LP relaxation, and then work with the sub-graph induced by the vertices with value $\frac{1}{2}$ in the optimal solution of the LP. (The extreme points of the LP relaxation of the vertex cover problem are half integral~\cite{nemhauser1974properties}.)
However, note that in the branch-and-bound tree, every node corresponds to a sub-graph of the original vertex-cover instance, and thus, ideally we would like to continue to use the Nemhauser-Trotter property at each node. Instead of designing a specialized implementation of branch-and-bound algorithm (where we fix variables that have an integer value in the LP optimal solution at each node), we show that strong-branching naturally incorporates ``fixing" the integral variables of the optimal solution of LP at each node recursively throughout the branch-and-bound tree. In fact, it does even better in the following sense: due to dual degeneracy, there may be  alternative linear programming optimal solutions with different corresponding sets of variables being integral. Strong branching ``avoids branching" on all the variables that are integral in any of the alternative optimal LP solutions, i.e., strong-branching is not fooled by the LP solver.  

In order to present our results, we need to define the notion of \emph{maximal set of integer variables} and present some properties regarding this set of variables. 

\begin{fact}[Lemma 1 in \cite{picard1977integer}]\label{fact:max_idx}
Consider instance of vertex cover and its LP relaxation. Let $x^1, x^2$ be two optimal solutions to this LP relaxation and let $I^1, I^2 \subseteq [n]$ be the indices of the integer valued variables in $x^1, x^2$ respectively. Then, there exists an optimal solution of the LP relaxation, $\hat{x}$,  such that the set of integer valued variables in $\hat{x}$ is $I = I^1 \cup I^2$.
\end{fact}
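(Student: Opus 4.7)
The plan is to give a direct construction of $\hat x$ and verify its optimality via LP complementary slackness, without any reduction to half-integral optima.

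Define
\[
\hat x_j \;=\; \begin{cases} x^1_j & j \in I^1, \\ x^2_j & j \in I^2 \setminus I^1, \\ 1/2 & j \notin I^1 \cup I^2. \end{cases}
\]
By construction, $\hat x_j \in \{0,1\}$ precisely when $j \in I^1 \cup I^2$, giving $I(\hat x) = I^1 \cup I^2$ automatically. Feasibility on each edge $uv$ follows from the forcing principle that if $v \notin I^1$, then $x^1_v \in (0,1)$, so $x^1_u + x^1_v \geq 1$ forces $x^1_u > 0$ and hence $x^1_u = 1$ for any $u \in I^1$; an analogous statement holds for $x^2$. Case analysis through all combinations of the partition $(I^1, I^2 \setminus I^1, V \setminus (I^1 \cup I^2))$ then gives $\hat x_u + \hat x_v \geq 1$ in every case.

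The main obstacle is proving optimality. For this I would let $(y^*, z^*)$ be any dual optimum of the LP relaxation and verify that $(\hat x, (y^*, z^*))$ satisfies complementary slackness; primal-dual optimality of $\hat x$ then follows. By strong duality, $(x^i, (y^*, z^*))$ is a primal-dual optimal pair for both $i \in \{1,2\}$, so CS holds simultaneously with $x^1$ and $x^2$; in particular, on every edge $e = uv$ with $y^*_e > 0$ one has $x^1_u + x^1_v = 1$ and $x^2_u + x^2_v = 1$. A short case analysis then shows that such a dual-active edge cannot cross between different regions of the partition: for instance, if $u \in I^1$ and $v \in I^2 \setminus I^1$, then $x^1_v \in (0,1)$ would force $x^1_u = 1 - x^1_v \in (0,1)$, contradicting $u \in I^1$, and the other cross-region cases are excluded analogously. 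In each remaining same-region case, the tight equation for either $x^1$ or for $x^2$ gives $\hat x_u + \hat x_v = 1$ directly, establishing edge slackness. Upper-bound slackness ($z^*_j > 0 \Rightarrow \hat x_j = 1$) is immediate from CS with $x^1$, which forces $x^1_j = 1$ and hence $j \in I^1$ with $\hat x_j = 1$. Primal-cost slackness ($\hat x_j > 0 \Rightarrow \sum_{e \ni j} y^*_e = 1 + z^*_j$) reduces, on a case-by-case basis, to the corresponding condition for $x^1$, $x^2$, or (for $j \notin I^1 \cup I^2$) to $0 < x^1_j = 1/2 < 1$, which already appears in the $x^1$-CS.

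The hard step is the edge-slackness verification: although each individual case is elementary, one must exploit the fact that a single dual optimum is jointly complementary-slack with every primal optimum, in order to rule out the crossing configurations on all dual-active edges at once. Once this is established, strong duality gives $\sum_j \hat x_j = \OPT(L(\I))$ automatically, and the identity $I(\hat x) = I^1 \cup I^2$ is immediate from the definition of $\hat x$, completing the proof.
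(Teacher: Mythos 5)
Your proof is correct. Note, though, that the paper does not actually prove this statement---it is imported as a citation to Lemma~1 of Picard--Queyranne, and the only glimpse of that argument in the paper (inside the proof of Lemma~\ref{lem:branch_int}) is the combination rule for two \emph{half-integral} optima together with the asserted cost identity $\sum_j z_j = \sum_j x'_j = \sum_j y_j$, which rests on a combinatorial exchange/cost-preservation argument rather than duality. Your construction of $\hat{x}$ agrees with that combination rule on half-integral inputs, but your optimality argument is genuinely different: you fix one dual optimum $(y^*,z^*)$, use the fact that it is complementary-slack with \emph{every} primal optimum to rule out dual-active edges crossing the three-part partition, and then read off all three complementary slackness conditions for $\hat{x}$ case by case. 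This buys you two things the quoted argument does not give directly: you never need $x^1,x^2$ to be extreme points (your $\hat{x}$ is half-integral by fiat even if the inputs are not), and optimality comes for free from strong duality rather than from a separate counting argument. The only blemish is the final clause reducing primal-cost slackness for $j \notin I^1 \cup I^2$ to ``$0 < x^1_j = 1/2 < 1$'': for a non-extreme-point optimum $x^1_j$ need not equal $1/2$, but all your argument actually uses is $x^1_j \in (0,1)$, hence $x^1_j > 0$, so the tightness of the dual constraint at $j$ still follows from CS with $x^1$ and nothing breaks.
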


Based on the above fact we obtain the following observation: Given a vertex cover instance, all optimal solutions of the LP relaxation that have a maximal number of integral coordinates actually have the same set $I \subseteq [n]$ of integral coordinates. This, given an instance of vertex cover $\I$, we call this subset the \emph{maximal set of integer variables} and denote as $I(\I)$.  Fact~\ref{fact:persistency} then implies that there exists an optimal solution to the vertex-cover instance where the  \emph{maximal set of integer variables} are fixed to integer values from the corresponding values of an maximal optimal solution of the LP relaxation. 

As discussed before, in a branch-and-bound tree, each node corresponds to a vertex cover instance on a sub-graph. Therefore, we can define the notion of maximal set of integer variables at a given node $N$, which we denote as $I(\I, N)$. Given an instance of vertex cover $\I$, we refer to $\mathcal{TP}(\I)$ as a partial branch-and-bound tree for $\I$ if all the nodes of $\mathcal{TP}(\I)$ cannot be pruned. For such a partial branch-and-bound tree, we use $\mathcal{TP}^B(\I)$ to refer to the dual bound that one can infer from the partial tree. 

The strength of strong-branching with regards to the maximal set of integer variables at a node $N$ is explained by the next result: Let $j$ belong to the maximal set of integer variables at node $N$. If the LP solver returns an optimal solution with $x_j$ integral, then clearly we do not branch on this variable. However, if the LP solver returns an optimal solution where $x_j$ is fractional and we decide on branching on this variable  based on strong-branching, then it must be that we have ``nearly solved" the instance, i.e., the dual bound that can be inferred from the partial tree must be equal to the optimal objective function value of the instance.  Formally we have the following:

\begin{proposition}\label{prop:makekernelsense}
Let $\I$ be any instance of vertex cover. Assume we break ties within the worst-bound rule for node selection by selecting a node with the largest depth. Consider a partial tree $\mathcal{TP}_S$ generated by strong-branching with the above version of worst-bound node selection rule. Let $N$ be a node of this tree that is not pruned. If $j \in I(\I,N)$ and we decide to branch on variable $x_j$ at node $N$ (using strong-branching), then 
\begin{enumerate}
\item $\mathcal{TP}_S^B(\I) = \OPT(\I)$.
\item After branching on $x_j$,in at most $\mathcal{O}(n)$ further branchings the algorithm returns an integral optimal solution. 
\end{enumerate}
\end{proposition}

The above result shows that  we ``almost never" branch on maximal set of integer variables at a node if we use strong-branching. However, after branching on a variable that is not in the maximal set of integer variables, what happens to the set of maximal set of integer variables at the child nodes? A very favorable property would be if the  maximal set of integer variables of the parent node is inherited by the child nodes.  Otherwise, while we may not branch on $x_j$ where $j \in I(\I,N)$ at node $N$, but we may end up branching on $j$ for a child $N'$ of $N$ --- in other words, we are then not really ``fixing" variable $x_j$.  As it turns out the above bad scenario does not occur. Formally we have the following:

\begin{theorem}\label{thm:nt-monot}
Let $\I$ be an instance of vertex cover. Consider any internal node $N$ of $\mathcal{T}_S(\I)$ 
and let $N'$ be a child node of $N$ that results from branching on $x_v$ where $v \not \in I(\I, N)$. Then, $I(\I, N) \subset I(\I, N')$.
\end{theorem}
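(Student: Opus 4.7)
The plan is to prove the stronger claim that for every $u \in I(\I, N) \cap W$, where $W$ denotes the free vertices at the child $N'$, there is an LP optimum of the residual vertex-cover LP on $G_N[W]$ that is integer at $u$. Together with the trivial fact that $\{v\} \cup N(v) \subseteq I(\I, N')$ (these coordinates are fixed by the branching), this yields $I(\I, N) \subseteq I(\I, N')$, and strict inclusion follows since $v \in I(\I, N') \setminus I(\I, N)$.

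Without loss of generality I consider the child $N'$ with $x_v = 0$ (the $x_v = 1$ case is analogous with $N(v)$ replaced by $\emptyset$ in what follows). Since $v \notin I(\I, N)$, Facts~\ref{fact:persistency} and~\ref{fact:max_idx} place $v$ in the half-integral class $V_{1/2}$ of the Nemhauser--Trotter decomposition of $G_N$. Fix a maximally-integer LP optimum $\hat{x}$ of $G_N$ taking values $0$ on $V_0$, $1$ on $V_1$, and $1/2$ on $V_{1/2}$. I will use the standard NT structural fact that $V_0$'s neighbors lie entirely in $V_1$, since a $V_0$-vertex has LP value $0$ forcing each neighbor to value $\geq 1$. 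Let $N_1 := N(v) \cap V_1$, $N_{1/2} := N(v) \cap V_{1/2}$, $V_1' := V_1 \setminus N_1$, and $V_{1/2}' := V_{1/2} \setminus (\{v\} \cup N_{1/2})$; because $v$ has no $V_0$-neighbor, $V_0 \subseteq W$, so $W = V_0 \sqcup V_1' \sqcup V_{1/2}'$.

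The centerpiece is the following \emph{decoupling identity}:
\begin{equation*}
\OPT(L(G_N[W])) = |V_1'| + \OPT(L(G_N[V_{1/2}'])).
\end{equation*}
Granting it, I define $y^*$ on $W$ by $y^*|_{V_0} = 0$, $y^*|_{V_1'} = 1$, and $y^*|_{V_{1/2}'}$ equal to any LP optimum of the internal vertex-cover LP on $G_N[V_{1/2}']$. Feasibility on $G_N[W]$ is direct from the NT structural fact: edges incident to $V_0$ in $G_N[W]$ all go to $V_1'$ (covered by $V_1' = 1$), edges involving $V_1'$ are trivially covered, and $V_{1/2}'$-internal edges are satisfied by construction. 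The objective of $y^*$ matches the right-hand side of the identity, so $y^*$ is an LP optimum of $G_N[W]$ integer on $V_0 \cup V_1' = I(\I, N) \cap W$, which completes the proof modulo the identity.

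The decoupling identity I would prove by LP-duality bookkeeping. The ``$\leq$'' direction is immediate from feasibility of $y^*$. For ``$\geq$'', any feasible $y$ of $G_N[W]$ satisfies $\sum_{u \in V_{1/2}'} y_u \geq \OPT(L(G_N[V_{1/2}']))$ since $y|_{V_{1/2}'}$ is LP-feasible for the internal subproblem, while $\sum_{u \in V_0 \cup V_1'} y_u \geq |V_1'|$ would follow from $\OPT(L(G_N[V_0 \cup V_1'])) = |V_1'|$. To establish this last equality, take a max-fractional-matching $\mu^*$ of $G_N$ dual to $\hat{x}$; complementary slackness forces $\mu^*$ to be supported on $V_0$--$V_1$ and $V_{1/2}$--$V_{1/2}$ edges, with $\sum_{e \ni u} \mu^*_e = 1$ for every $u \in V_1$. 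Summing this over $u \in V_1'$ and using the vanishing of $\mu^*$ on $V_1$--$V_1$ and $V_1$--$V_{1/2}$ edges shows the restriction of $\mu^*$ to $V_0$--$V_1'$ edges is a fractional matching on $G_N[V_0 \cup V_1']$ of weight exactly $|V_1'|$, giving $\OPT(L(G_N[V_0 \cup V_1'])) \geq |V_1'|$; the reverse inequality is witnessed by $\hat{x}|_{V_0 \cup V_1'}$. The main obstacle I anticipate is precisely this complementary-slackness bookkeeping --- splitting the total mass $|V_1|$ cleanly between the $V_1'$- and $N_1$-portions, using the NT-forced support pattern of $\mu^*$ to isolate exactly the $|V_1'|$ contribution on $V_0$--$V_1'$ edges, is the technical linchpin that makes the otherwise-coupled LP on $G_N[W]$ decompose as stated.
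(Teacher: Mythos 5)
Your proof is correct, but it takes a genuinely different route from the paper's. The paper treats the two children separately: for the $x_v=1$ child it simply bumps $x_v$ from $\frac{1}{2}$ to $1$ in the maximal optimum $x^*$ and notes that the child's LP value rises by exactly $\frac{1}{2}$; for the $x_v=0$ child it proves a Hall-type expansion lemma (Lemma~\ref{lem:neighbors}: $|\delta(S)\cap V_0^*|\ge |S|$ for every $S\subseteq V_1^*$, established by a local-improvement contradiction) and then runs a two-stage exchange that massages an \emph{arbitrary} optimum $x'$ of the child into one equal to $1$ on all of $V_1^*$ without increasing cost. You instead construct the desired optimum of the child directly and certify its optimality by LP duality: your decoupling identity reduces everything to $\OPT(L(G_N[V_0\cup V_1']))=|V_1'|$, which you witness with a fractional matching extracted from complementary slackness. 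The two arguments are dual twins --- the paper's expansion lemma is precisely the Hall-type condition guaranteeing your fractional matching saturating $V_1'$ into $V_0$, and both rest on the same Nemhauser--Trotter structural facts ($N(V_0)\subseteq V_1$ and half-integrality). What your version buys is uniformity (one argument covers both children, with $N(v)$ replaced by $\emptyset$) and a clean optimality certificate in place of the somewhat delicate two-stage rounding; what the paper's buys is that it is entirely elementary and never invokes the dual LP. One presentational point: you should say explicitly that the LP at $N'$ decomposes as a constant plus the vertex-cover LP on the residual graph $G_N[W]$, so that $I(\I,N')$ equals $\{v\}\cup N(v)$ together with the maximal integer set of $L(G_N[W])$; this step is what licenses passing to $G_N[W]$ at all, and while it is implicit in both your write-up and the paper's framework, it deserves a sentence.
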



Therefore, using Proposition~\ref{prop:makekernelsense} and Theorem~\ref{thm:nt-monot} together, we can conclude that when using strong-branching, we are essentially repeatedly using Nemhauser-Trotter property recursively and consistenly through-out the branch-and-bound tree: If $j$ is in the maximal set of integer variables at node $N$, it continues to remain in the maximal set of integer variables for all the child nodes of $N$; and we do not branch on such a variable $x_j$ at node $N$ or any of its children unless the instance is essentially solved (i.e., dual bound inferred from the tree equals the values of the IP). 

Here we present an example to illustrate that there are variable selection rules for which the property described in Theorem \ref{thm:nt-monot} does not hold. See the instance $\I^*$ shown in Figure \ref{fig:monoton_ex}. Consider the following partial branch-and-bound tree, letting $N$ denote the root node. Observe that there is an optimal LP solution that sets $x_a = x_c = 1$ and $x_b = x_d = 0$; therefore, $I(\I^*, N) = \{a, b, c, d\}$. However, suppose that an adversarial LP solver returns the optimal basic feasible solution $x_a = x_b = x_c = x_d = \frac{1}{2}$. Suppose we branch on $x_b$ and consider the sub-problem resulting from $x_b = 1$, which we denote $N'$. The unique optimal solution to $N'$ sets $x_b = 1$ and $x_a = x_c = x_d = \frac{1}{2}$; therefore, $I(\I^*, N') = \{b\} \subset I(\I^*, N)$. In fact, this example can be easily extended to that of Theorem \ref{thm:sep_example} (see below) to show that this variable selection rule similarly results in an exponentially larger branch-and-bound tree as compared to one that is obtained by strong-branching. 

\begin{figure}[ht]
\centering
\includegraphics[width=0.4\textwidth]{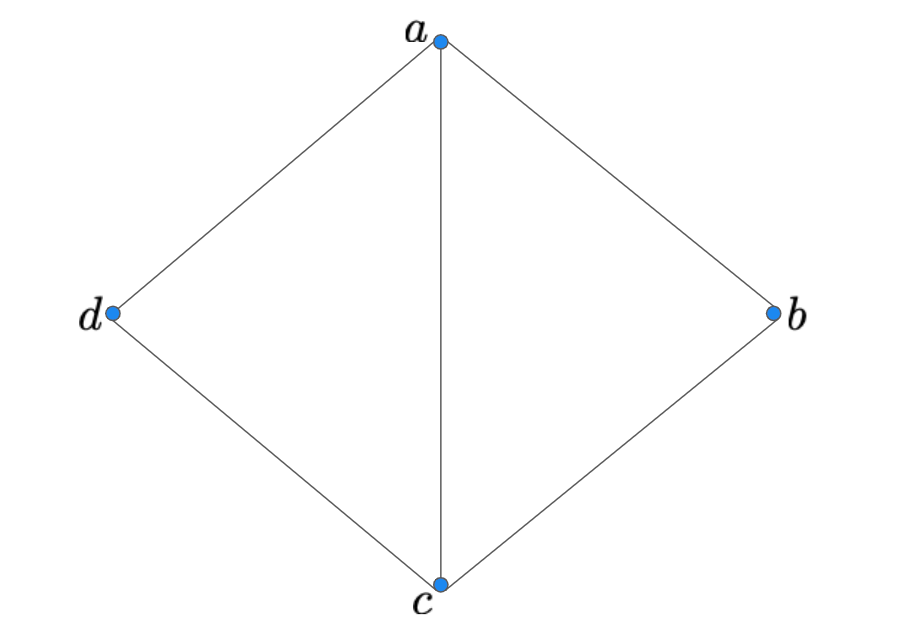}
\caption{Instance $\I^*$: example illustrating that the property of Theorem \ref{thm:nt-monot} is not true for every variable selection rule.}
\label{fig:monoton_ex}
\end{figure}

We present proof of Proposition~\ref{prop:makekernelsense} and Theorem~\ref{thm:nt-monot} in Section~\ref{sec:kernels}.

\paragraph{Superiority of strong-branching.}
There are very few papers that give upper bounds on sizes of branch-and-bound tree (when we use 0-1 branching)~\cite{dey2021branch,borst2021integrality}. These papers show that certain class of IPs with random data can be solved using polynomial-size branch-and-tree with high probability. However, these results do not depend on the variable selection rule used. Theorem~\ref{thm:vc_fpt} above is the first result of its kind that we are aware of, which uses a specific variable selection rule to prove upper bounds on size of branch-and-bound tree. To further highlight the importance of strong-branching in obtaining this upper bound result, we next show that if we do not use the strong-branching rule and we have an ``adversarial" LP solver, then we may need exponentially larger trees to solve the instance.

In particular, we next demonstrate the superiority of strong-branching by comparing it with another variable selection rule for the vertex cover problem (which we call the \emph{greedy-rule}) when the LP solver is adversarial, i.e. the LP solver always gives the most fractional extreme point solution. The greedy-rule for variable selection is intuitive and natural for vertex cover:  branch on the vertex with most fractional neighbors. Since setting such a vertex to $0$ would enforce all of its fractional neighbors to be $1$, one might think this rule provides the most ``local improvement''.

%
\begin{theorem}\label{thm:sep_example}
Consider an LP solver with the following property: Among all optimal extreme point solutions, it reports an optimal extreme point with maximal number of fractional components. Given an instance $\I$ and the above LP solver, let $\mathcal{T}_S(\I)$ and $\mathcal{T}_G(\I)$ be some branch-and-bound trees that solves instance $\I$ obtained using the strong-branching rule and the greedy rule respectively. 

There is an instance $\I^*$ of vertex cover such that 
$$|\mathcal{T}_G(\I^*)| \geq 2^{\Omega(n)} \cdot |\mathcal{T}_S(\I^*)|$$ 
and furthermore 
$$|\mathcal{T}_G(\I^*)| \geq 2^{\text{cst}\cdot \Gamma(\I^*)}$$ 
where $\text{cst}$ is a constant strictly greater than $2$.
\end{theorem}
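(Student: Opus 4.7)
The plan is to build $\I^*$ as the disjoint union of $m$ copies of a single gadget $H$, where $H$ is the diamond of Figure~\ref{fig:monoton_ex} augmented with a small odd-cycle component glued to one of its vertices. I want $H$ to satisfy three properties: (i) there is an all-$\frac{1}{2}$ basic feasible solution that is optimal for $L(H)$, so the adversarial solver returns it at the root of every subproblem in which none of the copies has been touched; (ii) the maximal set of integer variables $I(H)$ contains all four diamond vertices, so that strong-branching is protected by Proposition~\ref{prop:makekernelsense} and Theorem~\ref{thm:nt-monot} from being fooled by the adversarial LP; and (iii) $\gamma := \OPT(H) - \OPT(L(H)) > 0$, with the gap provided by the odd cycle. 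Taking $m$ disjoint copies then gives $n = \Theta(m)$ and $\Gamma(\I^*) = m\gamma$.

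\textbf{Upper bound on $|\mathcal{T}_S(\I^*)|$.} Because $\I^*$ is a disjoint union of gadgets, $I(\I^*)$ contains every diamond vertex of every copy. Along any root-to-leaf path of $\mathcal{T}_S(\I^*)$, Theorem~\ref{thm:nt-monot} says this maximal integer set only grows, and Proposition~\ref{prop:makekernelsense} then forbids strong-branching from ever branching on a diamond vertex until the partial-tree dual bound has reached $\OPT(\I^*)$. Combined with the bounded-search-tree argument behind Theorem~\ref{thm:vc_fpt}, which only has to pay for the ``truly fractional'' odd-cycle part of each copy, I get $|\mathcal{T}_S(\I^*)| \le 2^{2\Gamma(\I^*)+2} + O(n) = 2^{2m\gamma+2} + O(m)$.

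\textbf{Lower bound on $|\mathcal{T}_G(\I^*)|$.} I would argue by induction on the number $k$ of untouched copies of $H$ at a node of $\mathcal{T}_G(\I^*)$ that the subtree rooted there has at least $2^{ck}$ leaves for some constant $c > 2\gamma$ determined by $H$. The inductive step rests on two facts. First, the adversarial solver, given any $k \ge 1$ untouched copies, can always report an optimal extreme point that is fully fractional on every untouched copy (this is exactly the extra property $H$ must be engineered to have, regardless of how previous fixings have altered the touched copies). Second, since a diamond vertex in an untouched copy has the maximum number of fractional neighbors in this LP solution, the greedy rule is forced to branch on some $v \in I(\I^*, N)$. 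I then need to verify that both children $x_v = 0$ and $x_v = 1$ are non-trivial and still contain $k-1$ untouched copies --- this is precisely the job of the attached odd cycle, which prevents either branch from immediately integerizing the affected copy. Iterating the recursion across all $m$ copies yields at least $2^{cm}$ leaves.

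\textbf{Putting the bounds together.} With $n = \Theta(m)$, $\Gamma(\I^*) = m\gamma$, and $c > 2\gamma$, the two conclusions then fall out: $|\mathcal{T}_G(\I^*)|/|\mathcal{T}_S(\I^*)| \ge 2^{cm}/(2^{2m\gamma+2} + O(m)) = 2^{\Omega(n)}$, and $|\mathcal{T}_G(\I^*)| \ge 2^{(c/\gamma)\Gamma(\I^*)}$ with $c/\gamma$ a constant strictly greater than $2$. The main obstacle, as already flagged, is the gadget design coupled with the inductive step: $H$ must be chosen so that (a) the adversary can always manufacture an all-fractional optimum on the untouched copies whatever has happened elsewhere, and (b) both greedy children recurse non-trivially on the touched copy. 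A naive attachment of the odd cycle will let the $x_v=1$ child collapse the attached cycle to integers immediately, breaking the recursion and destroying the lower bound, so the precise placement of the odd cycle inside $H$ is where the real work will be.
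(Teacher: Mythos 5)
There is a genuine gap, and you have in fact flagged it yourself: the gadget $H$ is never constructed, and the construction is not a routine detail --- it is the crux of the theorem. The paper sidesteps the difficulty you describe (the $x_v=1$ child collapsing the odd cycle) by \emph{not} gluing the two ingredients together: $\I^*$ is the disjoint union of $m$ triangles and $\tfrac{1}{2}m$ diamonds as separate components. Because they are disjoint, branching inside a diamond never disturbs a triangle, strong-branching (product score) assigns score $0$ to every diamond vertex and score $\tfrac14$ to every triangle vertex, so it resolves only the triangles and finishes in $2^{m+1}+O(m)$ nodes, while greedy is lured by the degree-$3$ diamond vertices (three fractional neighbors versus two) under the adversarial all-$\tfrac12$ LP solution and spends its first $\tfrac{m}{2}$ levels on diamonds.

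The deeper problem is that your counting framework cannot deliver the constant strictly greater than $2$. If each of $m$ identical independent copies with per-copy gap $\gamma$ forces a fixed factor $F$ in every branch-and-bound tree, the best such a product argument can give is $F^m$ with $F$ no larger than what the minimal tree for one copy needs, i.e.\ essentially $4^{\gamma}$ per copy, which yields only $2^{2\Gamma}$ --- exactly matching, not beating, the strong-branching upper bound $2^{2\Gamma+2}+O(n)$. (Your two-children induction ``subtree with $k$ untouched copies has $\ge 2^{ck}$ leaves'' gives $2\cdot 2^{c(k-1)} \ge 2^{ck}$ only for $c\le 1$, so as stated it does not even reach $c>2\gamma$.) The paper's extra factor comes from a compounding effect that a uniform per-copy bound misses: when greedy branches a diamond with $x_a=1$ the dual bound does not move at all, so a node at depth $\tfrac{m}{2}$ that took $k$ such zero-progress branches must later branch on $2k+1$ triangles before it can be pruned, contributing $2^{2k+1}$ leaves; summing $\sum_{k}\binom{m/2}{k}4^k=5^{m/2}=2^{(\log_2 5)\Gamma}$ over the $\binom{m/2}{k}$ such nodes is what produces $\text{cst}=\log_2 5>2$. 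Any repair of your approach would need to reproduce this interaction between the ``bound-neutral'' branches and the gap-carrying components, which your disjoint-copies-of-one-gadget decomposition and per-copy induction do not capture.
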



We present the proof of Theorem~\ref{thm:sep_example} in Section~\ref{sec:sb_greedy_ex}.

\subsubsection{Strong branching does not work well for some instances}
Next we present a negative result regarding strong-branching, showing that strong-branching based branch-and-bound tree can have an exponential times as many nodes as compared to number of nodes in an alternative tree. In fact, the example shows something even stronger: any tree that branches only on variables fractional in the current nodes optimal solution will have exponential size, while an alternative tree has linear size. 

We begin by showing a seemingly surprising result about the existence of an extended formulation for any binary IP, that leads to a linear size branch-and-bound tree. 

\begin{proposition}\label{prop:bdg_ub}
For any integer program $\I$ with $n$ binary variables, there is an equivalent integer program that uses an extended formulation of the feasible region of $\I$ with $2n$ binary variables, which we refer to as $BDG(\I)$, that has the following property: there exists a branch and bound tree $\mathcal{T}^*(BDG(\I))$ that solves the instance $BDG(\I)$ and $|\mathcal{T}^*(BDG(\I))| \leq 4n + 1$.
\end{proposition}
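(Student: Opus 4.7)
My plan is to build the extended formulation $BDG(\I)$ by augmenting the original $n$ binary variables $x_1,\ldots,x_n$ of $\I$ with $n$ auxiliary binary variables $y_1,\ldots,y_n$ chosen so that a branch-and-bound tree of caterpillar shape, branching on $y_1,y_2,\ldots,y_n$ in this order, suffices to solve the enlarged instance. A caterpillar with $n$ spine branchings has $2n$ internal nodes and $2n+1$ leaves, for a total of $4n+1$ nodes, which matches the stated bound exactly. The $y$-variables will encode a sequence of yes/no decisions that collectively ``select'' a feasible solution of $\I$: branching on $y_i$ at the $i$-th spine node either cuts away a sub-problem that is already dominated or integer-feasible (so that child is pruned as a leaf), or commits us to another layer of the selection and passes down the spine.

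Concretely, my first step is to fix a feasible anchor $x^\star \in \I$ (e.g. an optimal integer solution; if $\I$ is infeasible the claim is trivial) and write coupling constraints so that $y_i = 0$ forces $x_i = x^\star_i$ while $y_i = 1$ leaves $x_i$ free subject to the original constraints of $\I$. I would express this via a disjunctive/indicator formulation, so that the constraints of $\I$ are enforced on the ``free'' side but harmlessly relaxed on the ``locked'' side. Equivalence of $BDG(\I)$ with $\I$ then needs to be verified in both directions: every $(x,y)$ feasible for $BDG(\I)$ projects to a feasible $x$ of $\I$, and conversely every feasible $x$ of $\I$ lifts to a feasible $(x,y)$ by setting $y_i = 1$ exactly on the coordinates where $x$ disagrees with $x^\star$. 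Next, I would exhibit $\mathcal{T}^*(BDG(\I))$ directly, branching on $y_1,\ldots,y_n$ in order, and argue by induction on the spine depth that at each spine node one child has an LP relaxation whose optimum is either integer or dominated by the current incumbent (hence becomes a pruned leaf), while the other child inherits a sub-problem of the same structural form with one fewer free $y$-variable.

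The main obstacle will be choosing the linking constraints carefully enough that the LP relaxation at each spine node behaves as advertised regardless of how an underlying LP solver breaks ties. On the ``locked'' side the LP must collapse enough of the feasible region to expose an integer optimum or an objective value dominated by the incumbent, but without over-tightening to the point of destroying equivalence with $\I$; on the ``free'' side the LP must retain enough information that the remaining sub-problem can be handled by continuing the spine one more step. I expect this can be arranged using big-$M$ indicator constraints keyed to each $y_i$, together with the invariant that the incumbent is updated at the spine node where $y_i$ is first fixed, so that the ``locked'' branch is always pruned by bound. Once the invariant is set up, counting along the spine gives $n$ internal pairs plus the $2n+1$ pruned or terminal leaves, for the claimed $4n+1$ total nodes.
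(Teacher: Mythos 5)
Your proposal takes a genuinely different construction from the paper's, and as sketched it does not work. The paper uses the Bodur--Dash--G\"unl\"uk lift: $Q$ is the convex hull of the points $(x,y)$ obtained from the \emph{vertices} $x$ of $P$, with $y_i=1$ exactly when $x_i$ is integral at that vertex. Two facts then drive the tree: (i) every vertex of $Q$ with $y_j=0$ has $x_j\in(0,1)$, so the regions $Q\cap\{y_j=0,\,x_j=0\}$ and $Q\cap\{y_j=0,\,x_j=1\}$ are \emph{empty}, and those grandchildren are pruned by infeasibility; (ii) the face $Q\cap\{y=\mathbf{1}\}$ has only integral vertices, so the final spine node is pruned by integrality (or infeasibility, if $\I$ has no feasible point). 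The spine branches on $y_1,\dots,y_n$, each off-spine child $y_j=0$ is branched once more on $x_j$, and counting gives $4n+1$.

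Your anchor-based formulation ($y_i=0$ forces $x_i=x_i^\star$ for a fixed optimal $x^\star$) has neither property, and the invariant you rely on --- that at each spine node one child is integer-feasible or dominated --- fails. The off-spine child at depth $j$ relaxes to $\max\{c^\top x : x\in P,\ x_i=x_i^\star \text{ for the locked indices } i\}$; when $P$ has a large integrality gap (e.g.\ the cross-polytope instance used in Corollary~\ref{cor:bdg_lb}), this LP value far exceeds $c^\top x^\star$ and the node is neither integral nor prunable by bound --- indeed no incumbent is even available until the fully locked node $x=x^\star$ at the bottom of the spine. Those off-spine nodes must then be branched further on the original $x$ variables, which for the cross-polytope costs $2^{\Omega(n)}$ nodes, exactly the behaviour the proposition is meant to circumvent; no choice of big-$M$'s fixes this, because pruning by bound would require each single fixing $y_j=0$ to already collapse the LP value to $\OPT$. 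Two smaller points: a caterpillar with $n$ spine branchings has $2n+1$ nodes, not $4n+1$ (the paper's extra $2n$ come from the $x_j$-branchings under $y_j=0$); and the infeasible case is not trivial, since the tree must still certify infeasibility, which the paper's construction does via the empty face $Q\cap\{y=\mathbf{1}\}$.
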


The extended formulation corresponding to $BDG(\I)$ used in Proposition~\ref{prop:bdg_ub} was first introduced in~\cite{bodur2017cutting} to show that every binary integer program has an extended formulation with split rank of $1$. We also remark here that Proposition~\ref{prop:bdg_ub} does not imply that the decision version of binary IPs is in \emph{co-NP}. This is because the $BDG(\I)$  formulation may be of exponential-size in comparison to the original formulation. 

In  Corollary~\ref{cor:bdg_lb} below, we take the cross-polytope~\cite{dey2021lower} and apply
 the extended formulation of Proposition~\ref{prop:bdg_ub} to obtain an example where strong-branching based branch-and-bound tree can have an exponential times as many nodes as compared to number of nodes in an alternative tree.
\begin{corollary}\label{cor:bdg_lb}
There exists an instance $\I^*$ with $2n$ binary variables, such that the following holds: Let $\mathcal{T}(\I^*)$ be any tree that solves $\I^*$ satisfying the following property: if $x$ is the optimal solution to an internal node $N$ of $\mathcal{T}(\I^*)$, then the variable $j$ branched on at $N$ must be such that $x_j \in (0,1)$. Then, $|\mathcal{T}(\I^*)| \geq 2^{n +1} - 1$.   In particular, if $\mathcal{T}_S(\I^*)$ is a branch-and-bound tree generated using strong-branching that solve $\I^*$, then $|\mathcal{T}_S(\I^*)| \geq 2^{n +1} - 1$.  On the other hand,  there exists a tree $\mathcal{T}^*$ that solves $\I^*$ such that $|\mathcal{T}^*(\I^*)| \leq 4n + 1$.
\end{corollary}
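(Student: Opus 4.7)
The plan is to take as $\I^*$ the BDG extension of the cross-polytope instance $\I^{cp}$ from~\cite{dey2021lower}. Recall that $\I^{cp}$ has $n$ binary variables and every branch-and-bound tree solving $\I^{cp}$ by $0$-$1$ branching on its original variables has at least $2^{n+1}-1$ nodes. Setting $\I^* := BDG(\I^{cp})$ yields an instance with $2n$ binary variables, and the upper bound $|\mathcal{T}^*(\I^*)|\leq 4n+1$ is immediate from Proposition~\ref{prop:bdg_ub}. The remaining two parts of the statement reduce to establishing the main lower bound: any tree $\mathcal{T}(\I^*)$ whose every internal node branches on a variable fractional in that node's reported LP optimum has size at least $2^{n+1}-1$. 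The strong-branching case follows from this main claim once I verify that, under the product score with $\epsilon=0$ and the convention $0\cdot\infty=0$, any variable integer-valued in the current LP optimum receives score zero (one child is either infeasible or unchanged, and the other is unchanged), so strong-branching never picks such a variable while some fractional variable has positive score; and if no fractional variable has positive score, the node is already at an integer-feasible solution and can be pruned.

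The core of the proof is reducing the lower bound to the hardness of $\I^{cp}$ proved in~\cite{dey2021lower}. I would take the given tree $\mathcal{T}(\I^*)$ and construct from it a branch-and-bound tree $\widetilde{\mathcal{T}}$ for $\I^{cp}$ of no larger size, by projecting each branching onto the original $n$ variables. The property of the BDG lifting from~\cite{bodur2017cutting} I would exploit is that each auxiliary $y$-variable is tied by the lifting constraints to the original $x$-variables, so (a) at any node there is an LP optimum in which all fractional coordinates are among the original variables, and (b) a branching on a $y$-variable can be simulated by a corresponding $0$-$1$ branching on the associated original $x$-variable. Using this, one defines the projection node-by-node: when $\mathcal{T}(\I^*)$ branches on an original variable, $\widetilde{\mathcal{T}}$ branches on the same variable; when it branches on an auxiliary variable, $\widetilde{\mathcal{T}}$ replaces it by the equivalent original-variable disjunction (collapsing identical subtrees as needed). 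A short induction on the depth shows that $\widetilde{\mathcal{T}}$ is a valid branch-and-bound tree for $\I^{cp}$ with $|\widetilde{\mathcal{T}}|\leq|\mathcal{T}(\I^*)|$, and the bound $|\widetilde{\mathcal{T}}|\geq 2^{n+1}-1$ from~\cite{dey2021lower} transfers.

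The main obstacle is carrying out this projection rigorously: formalizing the precise correspondence between branching on an auxiliary $y$-variable of $BDG(\I^{cp})$ and a $0$-$1$ branching on an associated original $x$-variable, and verifying that the correspondence preserves feasibility, integrality, and dual bounds so that pruning behaves compatibly on the two sides. This is where the specific structure of the lifting in~\cite{bodur2017cutting} must be unpacked, together with a careful choice of LP optimum at each node of $\mathcal{T}(\I^*)$ that keeps the auxiliaries integer whenever possible, so that branching on a fractional variable is always forced onto the original $n$ variables of the cross-polytope. Once this bookkeeping is done, the cross-polytope lower bound yields the main claim, and the strong-branching statement and upper bound statement follow as described above.
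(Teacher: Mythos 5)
Your choice of instance ($\I^* = BDG(\I^{cp})$ for the cross-polytope), the upper bound via Proposition~\ref{prop:bdg_ub}, and the observation that strong-branching only ever scores fractional variables are all consistent with the paper. The gap is in your step (b): a branching on an auxiliary variable $y_j$ \emph{cannot} be simulated by the $0$-$1$ disjunction on $x_j$. By the definition of $Q$ in \cite{bodur2017cutting}, the child $y_j=0$ is the face of $Q$ spanned by the vertices whose $x_j$-coordinate is fractional, and the child $y_j=1$ is the face spanned by those with $x_j\in\{0,1\}$; neither child projects to $P\cap\{x_j=0\}$ or $P\cap\{x_j=1\}$, and in particular the $y_j$-disjunction does not remove the current fractional LP optimum while the $x_j$-disjunction does. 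This inequivalence is the entire point of the construction (it is what makes the $(4n+1)$-node tree of Proposition~\ref{prop:bdg_ub} possible), so any ``projection'' identifying the two disjunctions cannot preserve feasible regions or pruning behaviour. The ``main obstacle'' you flag is therefore not bookkeeping to be formalized; the step is false as stated.

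Fortunately the step is also unnecessary, and your point (a) already contains the correct --- and much shorter --- argument, which is the one the paper uses. Every vertex of $Q$ has integral $y$-coordinates by construction, and since all branching constraints are of the form $x_j=b$ or $y_j=b$ with $Q\subseteq[0,1]^{2n}$, every node's feasible region is a face of $Q$, so \emph{every} optimal vertex at every node (not merely some optimum, which is what you claim and which would not suffice against an arbitrary LP solver) has integral $y$. Hence a tree satisfying the corollary's hypothesis never branches on a $y$-variable at all: it is literally a $0$-$1$ branching tree on the original $x$-variables, and since $\textup{proj}_x(Q\cap\{x_j=b\}) = P\cap\{x_j=b\}$, it induces a tree of the same size for the cross-polytope instance, to which the $2^{n+1}-1$ lower bound of \cite{dey2021lower} applies directly. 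Replace the simulation argument with this observation and the proof closes.
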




Typically, when implementing a branch-and-bound algorithm, one might be inclined to restrict the algorithm to branch on variables that are fractional in the current optimal solution. Therefore the above example is counter-intuitive in that it shows there can be a significant separation between branch-and-bound trees that are restricted to branch on variables that are fractional in the current optimal solution and branch-and-bound trees that are allowed to branch on integer valued variables. To our knowledge, this is the first such explicit example in the literature.

We present a proof of Proposition~\ref{prop:bdg_ub} and Corollary \ref{cor:bdg_lb} in  Section \ref{sec:bdg_ex}.
\subsection{Computational results}
In the previous section, we have shown that strong-branching works well for vertex cover and on the other hand, strong-branching can sometimes produce exponentially larger trees than alternative trees to solve an instance.  However, in general it seems very difficult to analyze strong-branching for general MILPs on a case-to-case basis. Moreover, we would really like to answer the question: how good is the strong-branching based branch-and-bound tree in comparison to the optimal tree? In this section we try to shed light on this question using computational experiments.

\paragraph{Optimal branch-and-bound tree.}
We begin with a discussion of an ``optimal branch-and-bound tree" for a given instance.

First note that it is clear that the optimal branch-and bound tree uses the worst bound rule for node selection~\cite{wolsey1999integer}. Moreover, we will consider branching on all variables at a given node, whether it is integral or not in the optimal solution of the LP relaxation. Since we are using the worst bound rule for node selection, we only branch on nodes whose objective function value is at least as good as that of the MILP optimal objective function value.

Now consider a node whose LP optimal objective function value is equal to that of the MILP optimal objective function value. There are two possible scenarios:
\begin{itemize}
\item The optimal face of this LP is integral, i.e., all the vertices of the optimal face are integer feasible and therefore the LP solver (like simplex) is guaranteed to find an integral solution and prune the node.
\item At least one vertex of the optimal face is fractional. In this case, depending on whether the LP solver returns an integral vertex or not, we are able to prune the node or not. Moreover, if we arrive at a fractional vertex, then depending on properties of other nodes and how we break ties for node selection among nodes with same objective function value, we may or may not end up branching on this node. 
\end{itemize}
In other words, in the second case, the size of tree may depend on the type of optimal vertex reported by the LP solver. In order to simplify our analysis and to remove all ambiguity regarding the definition of the optimal branch-and-bound tree, we make the following assumption for results presented in this section.

\begin{assumption}\label{assum:int}
We assume that if there exists an optimal solution to the LP relaxation at a given node that is integral,  then the LP solver finds it. 
\end{assumption}

\begin{observation}\label{obs:assumption}
When using an LP solver which satisfies Assumption \ref{assum:int}, a branch-and-bound tree using the worst bound rule  never branches on a node whose optimal objective function value is equal to that of the IP solver. 
\end{observation}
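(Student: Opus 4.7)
The plan is to show that under worst-bound node selection combined with Assumption \ref{assum:int}, the incumbent reaches $z^* := \OPT(\I)$ before any node with LP optimum equal to $z^*$ is ever processed for branching; any such node is then pruned by bound rather than branched. I would argue this by exhibiting, along an ``optimal'' path through the tree, a node whose processing is guaranteed to lift the incumbent up to $z^*$.

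Let $x^*$ be an IP-optimal solution with value $z^*$, and consider the unique root-to-leaf path $P_0 = \text{root}, P_1, \dots, P_d$ in the branching tree along which $x^*$ satisfies every encountered branching constraint. Since $x^*$ is LP-feasible with objective value $z^*$ at every $P_i$, the LP optimum is non-increasing along the path and bounded below by $z^*$. Let $P^*$ be the shallowest $P_k$ whose LP optimum equals $z^*$; then $x^*$ itself is an integer LP-optimal solution at $P^*$, so by Assumption \ref{assum:int} the LP solver at $P^*$ returns an integer optimum, and $P^*$ is pruned by integrality upon processing, setting the incumbent to $z^*$.

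Finally, I would sequence the processing via worst-bound: all strict ancestors of $P^*$ on the path have LP optimum strictly greater than $z^*$ and are therefore processed before any LP-value-$z^*$ node is considered. Once $P^*$'s parent is branched, $P^*$ becomes active with the maximum possible LP value $z^*$; under the tie-breaking rule implicit in the section's definition of the optimal tree (e.g., the largest-depth rule used in Theorem \ref{thm:vc_fpt}, or any rule that selects a node whose LP admits an integer optimum when tied), $P^*$ is processed no later than any other LP-value-$z^*$ node. Processing $P^*$ raises the incumbent to $z^*$, after which every remaining LP-value-$z^*$ node satisfies the pruning-by-bound condition and is never branched on. The main obstacle is precisely this tie-breaking subtlety: an adversarial tie-breaking could force branching on an LP-value-$z^*$ node whose LP has only fractional optima, but the optimal-tree convention of this section rules this out by picking the tie-breaking that minimizes tree size, which coincides with processing $P^*$ first.
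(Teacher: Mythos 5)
Your proof is correct and follows essentially the same route as the paper's: both locate, along the path of branching constraints satisfied by an IP-optimal solution, the first node whose LP value drops to $\OPT(\I)$, invoke Assumption \ref{assum:int} to get an integral LP optimum (hence pruning and an incumbent of value $\OPT(\I)$) there, and use the worst-bound rule to conclude this happens before any node of value $\OPT(\I)$ is branched. The tie-breaking caveat in your last paragraph is unnecessary: every strict ancestor of that node has LP value strictly greater than $\OPT(\I)$ (and can never be pruned), so it must be branched before any value-$\OPT(\I)$ node becomes eligible for selection, which means the node in question is created, solved, and pruned by integrality --- fixing the incumbent at $\OPT(\I)$ --- before any tie among value-$\OPT(\I)$ nodes ever has to be broken.
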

Proof of Observation~\ref{obs:assumption} is presented in Section~\ref{sec:opttreewelldefined}.
The above observation clearly makes the notion of ``optimal branch-and-bound tree" for a given instance well-defined.

\paragraph{Dynamic programming algorithm for finding the optimal branch-and-bound tree.}

In Section~\ref{sec:dpalgo} we present a dynamic programming (DP) algorithm that computes an optimal branch-and-bound tree for a given instance under Assumption~\ref{assum:int} for the LP solver. 
\begin{theorem}\label{thm:DP}
Under Assumption~\ref{assum:int}, there exists an algorithm with running time $\textup{poly}(data)\cdot3^{O(n)}$ time to compute an optimal branch-and-bound tree for any binary MILP instance $\I$ defined on $n$ binary variables. 
\end{theorem}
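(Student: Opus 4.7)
The plan is to set up a dynamic program whose states are the possible nodes that can arise in any $0$-$1$ branch-and-bound tree. Every such node is uniquely identified by the partial assignment $\sigma \colon S \to \{0,1\}$ of binary variables fixed on the path from the root to that node, where $S \subseteq [n]$. The number of such partial assignments is exactly
\begin{equation*}
\sum_{k=0}^{n} \binom{n}{k}\, 2^{k} \;=\; 3^{n},
\end{equation*}
which will be the source of the $3^{O(n)}$ factor in the running time. The key structural observation driving the DP is that the optimal branch-and-bound tree rooted at any non-prunable node $\sigma$ must branch on some variable $j \in [n]\setminus S$, and its two child subtrees are themselves optimal trees for the partial assignments $\sigma \cup \{j \mapsto 0\}$ and $\sigma \cup \{j \mapsto 1\}$; hence the optimal size $T[\sigma]$ obeys a natural recursion over the lattice of partial assignments ordered by $|S|$.

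First I would compute $\OPT(\I)$ by enumerating all $2^n$ complete binary assignments, solving a single LP for each to check feasibility and record the objective, and taking the best value; this costs $2^n \cdot \textup{poly}(\text{data}(\I))$. Given $\OPT(\I)$, Observation~\ref{obs:assumption} together with Assumption~\ref{assum:int} makes the status of every node deterministic: the node corresponding to $\sigma$ is \emph{prunable} if and only if the LP obtained from the root relaxation by adding the constraints $x_j = \sigma(j)$ for $j\in S$ is infeasible or has optimal value at least $\OPT(\I)$. This test requires one LP solve per state and runs in $\textup{poly}(\text{data}(\I))$ time; moreover, prunability depends on $\sigma$ alone, independent of any tie-breaking in the LP solver.

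Next I would fill in the table $T[\sigma]$ in decreasing order of $|S|$, so that both $T[\sigma \cup \{j \mapsto 0\}]$ and $T[\sigma \cup \{j \mapsto 1\}]$ are already available when processing $\sigma$. The base case is $T[\sigma] = 1$ whenever $\sigma$ is prunable. Otherwise,
\begin{equation*}
T[\sigma] \;=\; 1 \;+\; \min_{j \in [n]\setminus S}\Bigl( T[\sigma \cup \{j \mapsto 0\}] \;+\; T[\sigma \cup \{j \mapsto 1\}] \Bigr),
\end{equation*}
with the minimizing $j$ stored so the optimal tree can be reconstructed by tracing the stored choices from the root. The answer is $T[\emptyset]$. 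Each of the $3^n$ states involves one LP solve and $O(n)$ table lookups, giving a total running time of $3^n \cdot n \cdot \textup{poly}(\text{data}(\I)) = \textup{poly}(\text{data}(\I)) \cdot 3^{O(n)}$, as required.

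The main points to argue carefully are (a) that every node of any branch-and-bound tree can be identified with a distinct partial assignment, so no state is visited twice along a root-leaf path and the recursion genuinely ranges over all candidate trees, and (b) that the prunability of a node depends only on $\sigma$ and not on the history of prior branching or on LP solver idiosyncrasies, which is precisely what Assumption~\ref{assum:int} and Observation~\ref{obs:assumption} secure. Once these two facts are in place, correctness of the DP follows by a standard bottom-up induction on $|S|$, and the running time bound is immediate from the state count.
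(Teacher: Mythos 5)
Your proposal is correct and essentially identical to the paper's proof: the paper's Algorithm 2 runs the same dynamic program over the $3^n$ faces of $[0,1]^n$ (equivalently, your partial assignments), uses Assumption~\ref{assum:int} and Observation~\ref{obs:assumption} to make prunability of each face deterministic via one LP solve, and applies the same recurrence and bottom-up ordering with the same $\textup{poly}(\text{data})\cdot 3^{O(n)}$ accounting. The only differences are cosmetic (you initialize pruned nodes to $1$ rather than $0$, i.e., you count nodes rather than branchings, and you state the pruning test in minimization orientation).
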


Section~\ref{sec:dpalgo} also presents some computational enhancements (like exploiting parallel computing) to improve the wall clock run time of the DP algorithm.

\paragraph{Computationally comparison of various variable selection rules against the optimal branch-and-bound tree.}

We conduct the first study comparing branch-and-bound trees employing different variable selection rules to the optimal branch-and-bound tree.  Detailed results are presented in Section~\ref{sec:experiments}.


We consider the following variable selection rules: strong-branching with linear score function, strong-branching with product score function, most infeasible, and random. We evaluate the performance of these rules on a wide range of problems: general packing and covering IPs (packing-type, covering-type, and mixed packing and cover instances), lot-sizing and variants, vertex cover, chance constraint programming (CCP) models for multi-period power planning and portfolio optimization, and stable set on a bipartite graph with knapsack sides constraint (100 instances for each model). Note that all of these instances have up to $20$ binary variables since we are unable to run the DP algorithm of Theorem~\ref{thm:DP} for larger instances. See Section~\ref{sec: Data generation} for a detailed description of all these instances. 

We present a discussion of all our results (together with tables and explanatory figures) in Section~\ref{sec:compres}. Here are some of our notable findings on sizes of branch-and-bound tree trees:
\begin{itemize}
    \item Random consistently performs the worst.
    \item Strong branching \textit always performs the best. 
    \item While the performance of two variants of strong-branching is comparable on all problems considered in this study, strong branching with product score function (SB-P) dominates over strong branching with linear score function (SB-L) on 8 out of 10 problems, although by a small margin.
    \item The geometric mean of branch-and-bound tree size using strong-branching remains less than twice the size of optimal branch-and-bound tree for all problems considered in this study. This is not that case for all branching rules: the Random rule (and sometimes even the most-infeasible rule) is typically many more times larger than the optimal branch-and-bound tree.
\end{itemize}
Finally, see Figure \ref{fig:gmratio} for a summary of the computational results. It is clear that while strong-branching does quite well (within a factor of $2$), there is clearly scope for coming up with better rules for deciding branching variables, for example for problems such as CCP portfolio optimization. It would be interesting to see if one can use machine learning techniques to learn from the optimal branch-and-bound trees.

\begin{figure}[ht]
    \centering
    \includegraphics[width=\linewidth]{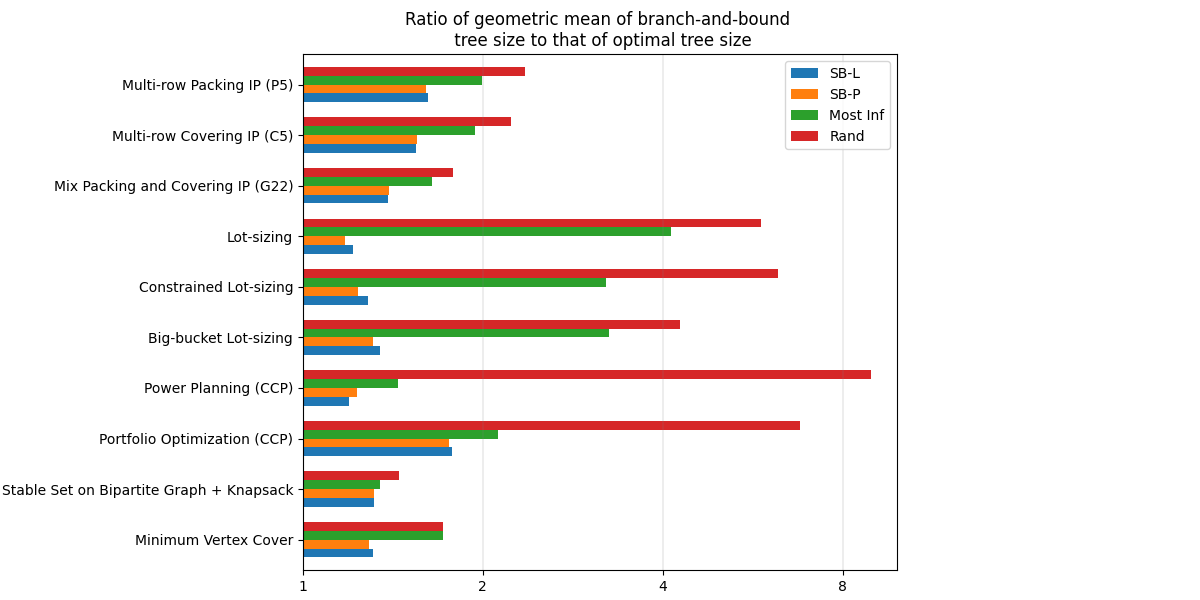}
    \caption{Ratio of geometric mean of branch-and-bound tree sizes to geometric mean of optimal tree sizes over all instances of a problem for various branching strategies. ``Rand" stands for random, ``Most Inf" stands for most infeasible, ``SB-P" stands for strong-branching with product score function, and ``SB-L" stands for strong-branching with linear score function.}
    \label{fig:gmratio}
\end{figure}

In previous section (see Corollary~\ref{cor:bdg_lb}), we have seen an example where strong-branching performs badly while alternative branch-and-bound tree that has exponentially lesser nodes, branches on integer variables. So a natural question we would like to understand is the percentage of times the optimal branch-and-bound tree branches on integer variables for the various instances. These results are presented in Table~\ref{tab:ratio}.  We note that there are multiple optimal trees. So it may be possible that these exists other optimal trees with a slightly different number of branchings on integer variables.  

Here are some of our notable findings:
\begin{itemize}
\item Strong branching does relatively poorly on general packing and covering IPs which has a high fraction of integer branchings in the optimal tree and it does very well on lot-sizing where the optimal tree rarely branches on integers. So it would appear consistent with our hypothesis that more branchings on integer variables in the optimal tree implies strong-branching performs poorly.
\item On the other hand, for stable set on bipartite graph with knapsack side constraint, strong-branching does very well in spite of a lot of integer branchings in the optimal tree. 
\end{itemize}

\begin{table}[ht]

\begin{center}\small

\caption{Summary of average tree sizes (geometric) and percentage of branching on integral variable in optimal tree for all problems across 100 instances}\label{tab:ratio}


\begin{tabular}{lrrrrrr}



Problem                                    & Opt Tree & SB-L   & SB-P   & Most inf & Rand   & \% Int Branch \\
        	
\cline{1-7}

Multi-row Packing IP (P5)&25.4 & 41.2 & 40.9 &50.8 & 59.9 &48.2\% \\
Multi-row Covering IP (C5)&25.2 & 39.0 &39.1&48.9&56.4&49.0\% \\
Mix Packing and Covering IP (G22) & 10.2 & 14.2 & 14.2 & 16.8 & 18.3 & 48.0\% \\
Lot-sizing & 111.5 & 135.0 & 131.1 & 461.0 & 651.6 & 0.2\% \\
Constrained Lot-sizing & 101.9 & 131.1 & 125.9 & 328.0 & 635.0 & 0.2\% \\
Big-bucket Lot-sizing & 81.8 & 110.3 & 107.1 & 266.4 & 349.6 & 1.1\% \\
Power Planning (CCP) & 37.9 & 45.3 & 46.8 & 54.8 & 337.9 &1.1\% \\ 
Portfolio Optimization (CCP)&97.4 & 172.8 & 171.1 & 206.5 & 659.1 & 4.4\% \\
Stable Set on Bipartite Graph + Knapsack & 137.6 & 180.8 &180.8 & 185.5 & 199.6 & 47.7\% \\
Minimum Vertex Cover & 7.1 & 9.3 & 9.1 & 12.1 & 12.2 & 0.0\% \\
\cline{1-7}
\end{tabular}


\end{center}
\end{table}


In other words, there does not seem to be a direct relationship between the performance of strong-branching and the number of branching on integer variable of the optimal branch-and-bound tree. 

Finally we end this section with a word of caution regarding over-interpreting the computational results above: As mentioned above, due of the exponential nature of the DP algorithm to compute the optimal branch-and-bound tree, computational experiments could be performed only on relatively smaller problem sizes with up to 20 binary variables. Some of the observations derived here may not extrapolate to larger instances. 

\section{Analysis of Strong Branching for Vertex Cover}\label{sec:vertexcoverproof}

\subsection{Proof of Theorem~\ref{thm:vc_fpt}.}\label{sec:fpt}


 Throughout this section, we use $N_{j, 0}$ to denote the child node of $N$ that results from the branch $x_j = 0$; we use $N_{j, 1}$ similarly.

Throughout this section we make the following assumptions. Let $\I$ be any instance of vertex cover. Assume we break ties within the worst-bound rule for node selection rule by selecting a node with the largest depth.  Let $\mathcal{T}_S(\I)$ be \emph{some branch-and-bound tree generated by strong-branching} with the above version of worst-bound node selection rule that solver $\I$. Moreover, all results are independent of the underlying LP solver used.

We will require two preliminary results for the proof of Theorem~\ref{thm:vc_fpt}.

\begin{lemma}\label{lem:branch_frac}
Let $N$ be a node of $\mathcal{T}_S(\I)$ with optimal objective value  less than $\OPT(\I)$. Then strong-branching will branch on $v \not \in I(\I, N)$ at node $N$ and $N_{v, 0}, N_{v, 1}$ have optimal value at least $\frac{1}{2}$ more than that of $N$. 
\end{lemma}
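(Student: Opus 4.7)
The plan is to leverage two structural facts about the vertex-cover LP at $N$: (i) every extreme point of the relaxation---and of every face obtained by fixing a single variable to $0$ or $1$---is half-integral, so LP optimal values lie in $\tfrac12\mathbb{Z}_{\ge 0}$; and (ii) by Fact~\ref{fact:max_idx}, $I(\I,N)$ is exactly the complement of those indices that are fractional in \emph{every} optimal LP solution of the subproblem at $N$. Together, these two observations will force the product score $\text{score}_{P}$ to separate variables inside $I(\I,N)$ from variables outside it.

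First, I would show that the LP at $N$ must be fractional. Because $\I_N$ restricts $\I$, one has $\OPT(L(\I_N))\le \OPT(\I_N)$ and $\OPT(\I_N)\ge \OPT(\I)$; but the LP value at $N$ is strictly below $\OPT(\I)$, so the LP cannot have an integer optimum at $N$, and the solver returns some fractional $\hat x^{*}$. I would then argue that some fractional coordinate of $\hat x^{*}$ lies outside $I(\I,N)$. Otherwise, unioning the integer set of $\hat x^{*}$ with that of any LP optimum realizing $I(\I,N)$ via Fact~\ref{fact:max_idx} yields an LP optimum integer on all of $[n]$, hence an integer feasible optimum of $\I_N$ with value strictly below $\OPT(\I)$---a contradiction.

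Next, I would classify the product scores of the candidate (fractional) variables. For any $v\in I(\I,N)$, pick an LP optimum with $\hat x_v$ integer (say $\hat x_v=1$ by half-integrality); this solution remains optimal after fixing $x_v=1$, so $\Delta^{+}_v=0$ and the product score of $v$ is $0$. For any fractional $v\notin I(\I,N)$, a symmetric use of Fact~\ref{fact:max_idx} forces both $\Delta^{+}_v$ and $\Delta^{-}_v$ to be strictly positive: if, say, $\Delta^{+}_v=0$, an LP optimum of $\I_N$ with $x_v=1$ integer would union with one realizing $I(\I,N)$ to produce an LP optimum integer on $I(\I,N)\cup\{v\}$, contradicting maximality of $I(\I,N)$. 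Because the child LPs $L(N_{v,0})$ and $L(N_{v,1})$ are also half-integral, any strictly positive $\Delta$ is at least $\tfrac12$, and so the product score of such a $v$ is at least $\tfrac14$. Combining the two cases, strong-branching must pick some $v^{*}\notin I(\I,N)$, and both of its $\Delta$'s are at least $\tfrac12$, giving the lemma.

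The main obstacle I expect is the union-of-integer-sets step: one has to verify carefully, via Fact~\ref{fact:max_idx}, that the integer sets of the two chosen LP optima really do combine into a single LP optimum with integrality on the union, and that half-integrality is preserved along the faces $x_v=0$ and $x_v=1$---this half-integrality is exactly what upgrades a positive $\Delta$ to a $\Delta$ of size at least $\tfrac12$ and yields the quantitative bound stated in the lemma.
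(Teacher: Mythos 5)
Your proposal is correct and follows essentially the same route as the paper's proof: show the LP at $N$ has no integer optimum, use Fact~\ref{fact:max_idx} to conclude $I(\I,N)\neq[n]$ so some $v\notin I(\I,N)$ is $\tfrac12$ in every LP optimum, observe that every $u\in I(\I,N)$ has $\text{score}_P(u)=0$ while such a $v$ has both $\Delta^{+}_v,\Delta^{-}_v>0$, and upgrade strict positivity to $\ge\tfrac12$ via half-integrality of the children's LPs. The only difference is cosmetic: you make the union-of-integer-sets step explicit where the paper invokes the definition of the maximal integer set directly.
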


\begin{proof}
Since $N$ has optimal objective value less than $\OPT(\I)$, the LP relaxation at $N$ must not have an optimal solution that is integer. Note that if at $N$ we branch on $x_u$ where $u \in I(\I, N)$, it must hold that, at least one of $N_{u, 0}$ or $N_{u, 1}$ have optimal value the same as $N$. Therefore, $\text{score}_{P}(u) = 0$. We now show that there exists $v \in [n]$ such that $\text{score}_{P}(v) > 0$. If there is no optimal solution to $N$ that is integer, then $I(\I, N) \not = [n]$ by Fact \ref{fact:max_idx}. Therefore, there exists a $v \not \in I(\I, N)$ such that $x_v = \frac{1}{2}$ in every optimal solution to $N$. It follows that fixing $x_v \in \{0,1\}$ must result in a feasible solution to $N$ that has strictly greater value, and so branching on $x_v$ at $N$ leads to child nodes $N_{v, 0}$ and $N_{v, 1}$ where both have optimal value more than that of $N$. Further, since $N_{v, 0}$ and $N_{v, 1}$ have objective value strictly more than $N$ and all basic feasible solutions are half-integral, it holds that $N_{v,0}$ and $N_{v, 1}$ have objective value at least $\frac{1}{2}$ more than $N$. 
\end{proof}

\begin{lemma}\label{lem:branch_int}
Let $N$ be a node of $\mathcal{T}_S(\I)$ with an optimal LP solution that is integer. Then the sub-tree rooted at $N$ will have size $\mathcal{O}(n)$, where it finds an integer optimal solution. 
\end{lemma}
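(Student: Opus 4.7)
The plan is to deduce the lemma from Proposition~\ref{prop:makekernelsense} after observing that the maximal set of integer variables at $N$ equals the full index set $[n]$. First, if the LP solver happens to return an integer optimal solution at $N$, then $N$ is immediately pruned and the subtree rooted at $N$ consists of a single node, so the bound holds trivially. Hence I may assume that the LP solver returns a fractional optimal solution at $N$, in which case strong-branching selects some variable $x_j$ to branch on.

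Next I would argue that $I(\I, N) = [n]$. By hypothesis there exists an integer optimal LP solution $x^*$ at $N$, whose set of integer-valued indices is all of $[n]$. Together with Fact~\ref{fact:max_idx} (and the definition of the maximal set of integer variables following it) this forces $I(\I, N) \supseteq [n]$, so $I(\I, N) = [n]$. In particular, whatever variable $j$ strong-branching chooses at $N$ must lie in $I(\I, N)$, which means Proposition~\ref{prop:makekernelsense} applies at $N$. Part (2) of that proposition then gives that after the branching at $N$, only $O(n)$ additional branchings occur before an integer optimal solution to $\I$ is produced; part (1) further guarantees that at the moment of branching at $N$ the partial tree's dual bound already equals $\OPT(\I)$, so once the incumbent is updated to $\OPT(\I)$ every remaining open leaf is pruned. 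Each of these $O(n)$ subsequent branchings adds at most two new nodes, and every descendant of $N$ in $\mathcal{T}_S(\I)$ is created by one of those branchings, so the subtree rooted at $N$ has size at most $1 + O(n) = O(n)$ and terminates in an integer optimal solution, as required.

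The main obstacle is a careful interpretation step: turning the chronological statement of Proposition~\ref{prop:makekernelsense}(2) — that only $O(n)$ more branchings occur after the branching at $N$ — into the structural claim about the size of the subtree rooted at $N$. This step is clean because any descendant of $N$ in $\mathcal{T}_S(\I)$ is created by a branching that is chronologically later than the initial branching at $N$, so the total-branchings bound dominates the subtree-size bound. A secondary point to verify is that the definition of $I(\I, N)$ really applies to the subproblem at $N$ (so that Fact~\ref{fact:max_idx} gives the "maximal integer" optimum at $N$ rather than only at the root); this is immediate because the subproblem at $N$ is itself a vertex-cover LP on the subgraph induced by the unfixed variables.
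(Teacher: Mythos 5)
Your proposal is circular. The key step is your invocation of Proposition~\ref{prop:makekernelsense}, part~(2), to conclude that only $\mathcal{O}(n)$ further branchings occur after branching at $N$. But in the paper that part of Proposition~\ref{prop:makekernelsense} is itself \emph{derived from} Lemma~\ref{lem:branch_int}: its proof reads ``Property 2 follows directly from Lemma~\ref{lem:branch_int} and the fact that $I(\I,N)=[n]$.'' The logical order in the paper is Lemma~\ref{lem:branch_frac}, Lemma~\ref{lem:branch_int} $\Rightarrow$ Theorem~\ref{thm:vc_fpt} $\Rightarrow$ Proposition~\ref{prop:makekernelsense}, so you cannot use the proposition to establish the lemma. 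The observation you do prove independently --- that an integer optimal LP solution at $N$ forces $I(\I,N)=[n]$ via Fact~\ref{fact:max_idx} --- is correct but is only the hypothesis under which the real question arises; it does not by itself bound the subtree.

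What is missing is the actual combinatorial argument for why the subtree rooted at $N$ collapses to $\mathcal{O}(n)$ nodes rather than growing into a binary tree. The paper supplies this directly: if the solver returns a fractional optimum at $N$ and we branch on $x_v$ with (say) $y_v=0$ for the integer optimum $y$, then $N_{v,0}$ inherits $y$ and has value $\OPT(\I)$; either $N_{v,1}$ has strictly larger value and is pruned by bound, or $N_{v,1}$ also has value $\OPT(\I)$, in which case the half-integrality-based merging construction from \cite{picard1977integer} produces an \emph{integer} optimal solution $z$ of $N$ with $z_v=1$, so both children again have integer optimal LP solutions. Inductively, at every node of the subtree at most one child is ever branched on (the other is either pruned by bound or never selected, because the largest-depth tie-breaking rule drives the search down one branch until an integer solution is returned), yielding a path of length at most $n$ and hence $\mathcal{O}(n)$ nodes. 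None of this appears in your write-up, and without it (or some substitute) the $\mathcal{O}(n)$ bound is unsupported.
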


\begin{proof}
Let $y \in \{0,1\}^n$ be an optimal solution to the LP of node $N$. If the LP solver returns an integer solution, we are done. Suppose not, and suppose we branch on some variable $v$, and without loss of generality let $y_v = 0$. Then, of course $N_{v, 0}$ will have $y$ as an optimal solution and therefore have value $\OPT(\I)$. If $N_{v, 1}$ has value greater than $ \OPT(\I)$, this node gets pruned by bound and we continue by branching on $N_{v, 0}$ since it is now the open node with largest depth. Suppose instead $N_{v, 1}$ has value $\OPT(\I)$. Then, we argue in the next paragraph that there is an optimal solution to the LP corresponding to $N$ that is integer feasible, denote $z$, with $z_v = 1$; in this case we can break the tie between $N_{v, 0}, N_{v, 1}$ arbitrarily. Then, for every node $N'$ in the sub-tree rooted at $N$, either one child of $N'$ has value greater than $\OPT(\I)$ and is pruned by bound, or both children of $N'$ have optimal solutions that are integer. In the second case, only one of these two children will ever be branched on, since we break ties by branching on the node with largest depth, and therefore will find an integer solution before revisiting a shallower node. The result of the lemma follows.

Here we argue that if $N_{v,1}$ has a LP optimal objective function value $\OPT(\I)$, then there is an optimal solution to the LP corresponding to $N$ that is integral, denote $z$, with $z_v = 1$. Let $x'$ denote any optimal solution of $N_{v,1}$ and observe that $x',y$ are both optimal solutions to $N$. It follows from the proof of Lemma 1 in \cite{picard1977integer} that: given two half-integral optimal solutions $x', y$, there exists an optimal solution $z$ constructed as follows:
$$ z_j = \begin{cases}
1 & \text{ if } x'_j = 1 \text{ or } x'_j = \frac{1}{2}, y_j = 1\\
0 & \text{ if } x'_j = 0 \text{ or } x'_j = \frac{1}{2}, y_j = 0 \\
\frac{1}{2} & \text{ if } x'_j = y_j = \frac{1}{2}
\end{cases}
$$
In particular,~\cite{picard1977integer} shows that that $z$ constructed as above is feasible and satisfies, $\sum_{j}z_j = \sum_{j}x'_j = \sum_{j}y_j$. Clearly $z_v = x'_v = 1$ and since $y \in \{0,1\}^n$, it follows that $z \in \{0,1\}^n$ and is an optimal solution of the LP corresponding to $N$. 
\end{proof}

\begingroup
\def\thetheorem{\ref{thm:vc_fpt}}
\begin{theorem}
Let $\I$ be any instance of vertex cover. Assume we break ties within the worst-bound rule for node selection rule by selecting a node with the largest depth.  Let $\mathcal{T}_S(\I)$ be \emph{some branch-and-bound tree generated by strong-branching} with the above version of worst-bound node selection rule that solver $\I$. Then independent of the underlying LP solver used, 
$$|\mathcal{T}_S (\I)| \leq 2^{2\Gamma(\I) + 2} + \mathcal{O}(n).$$
\end{theorem}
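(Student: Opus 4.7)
The plan is to decompose $\mathcal{T}_S(\I)$ according to the LP optimal value at each node into a ``fractional'' region (LP value $<\OPT(\I)$), a ``boundary'' region (LP value $=\OPT(\I)$), and an ``excess'' region (LP value $>\OPT(\I)$), and to bound each piece separately. The exponential term $2^{2\Gamma(\I)+2}$ will arise from the depth of the fractional region via Lemma~\ref{lem:branch_frac}, and the additive $\mathcal{O}(n)$ from a single application of Lemma~\ref{lem:branch_int}.

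First I would let $F$ denote the set of nodes of $\mathcal{T}_S(\I)$ with LP value $<\OPT(\I)$ and verify that $F$ is a subtree containing the root in which every node must be branched: infeasibility is ruled out by the finite LP value, integer feasibility would produce an integer solution of value $<\OPT(\I)$ contradicting optimality, and pruning by bound would require an incumbent below $\OPT(\I)$ which likewise cannot exist. Hence Lemma~\ref{lem:branch_frac} applies at every $F$-node and forces both children to have LP value at least $\tfrac12$ larger than the parent's. Starting from root LP value $\OPT(\I)-\Gamma(\I)$, the LP value reaches $\OPT(\I)$ within $2\Gamma(\I)$ branchings, so the depth of $F$ is at most $2\Gamma(\I)$ and hence $|F|\leq 2^{2\Gamma(\I)+1}-1$. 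A simple children-count (each $F$-node contributes two children to $\mathcal{T}_S(\I)$, and $|F|-1$ of the resulting $2|F|$ children lie back inside $F$) shows there are exactly $|F|+1\leq 2^{2\Gamma(\I)+1}$ ``entry'' nodes --- children of $F$-nodes lying outside $F$.

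Next I would argue that a single invocation of Lemma~\ref{lem:branch_int} produces an incumbent of value $\OPT(\I)$. For any integer-optimal solution $y^{*}$ of $\I$, the root-to-leaf path of $\mathcal{T}_S(\I)$ consistent with $y^{*}$ first attains LP value $\OPT(\I)$ at some entry node $N^{*}$; at $N^{*}$ the vector $y^{*}$ is simultaneously LP-optimal and integer, so Lemma~\ref{lem:branch_int} bounds the subtree rooted at $N^{*}$ by $\mathcal{O}(n)$ and guarantees that $y^{*}$ is delivered as an incumbent. Once the incumbent equals $\OPT(\I)$, every remaining open node has LP value $\geq\OPT(\I)$ and is pruned by bound in a single step.

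The main technical hurdle will be to control the entry nodes the algorithm may process \emph{before} $N^{*}$ under the worst-bound rule with largest-depth tie-breaking. These are entries in the boundary region with no integer LP optimum, and at each such entry the maximality of $I(\I,N)$ combined with the half-integrality of vertex-cover basic solutions (essentially the argument inside Lemma~\ref{lem:branch_frac}) forces strong-branching to select some $v\notin I(\I,N)$ for which both $\Delta^{-}_v$ and $\Delta^{+}_v$ are at least $\tfrac12$, so both children land in the ``LP value $\geq\OPT(\I)+\tfrac12$'' region. Under worst-bound these children are queued strictly behind every remaining boundary entry, and in particular behind $N^{*}$, so once $N^{*}$ is processed and the incumbent is set each such child contributes only a single pruned-by-bound node. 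Summing $|F|\leq 2^{2\Gamma(\I)+1}-1$, the at most $2^{2\Gamma(\I)+1}$ entries (each pruned trivially or absorbed into the single $\mathcal{O}(n)$ Lemma~\ref{lem:branch_int} chain at $N^{*}$), together with the immediately pruned children produced by such ``bad'' boundary entries, a careful accounting yields $|\mathcal{T}_S(\I)|\leq 2^{2\Gamma(\I)+2}+\mathcal{O}(n)$.
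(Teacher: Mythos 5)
Your proposal follows essentially the same route as the paper's proof: the same two lemmas (Lemma~\ref{lem:branch_frac} to force a $+\tfrac12$ LP increase per level below $\OPT(\I)$, hence a $2^{O(\Gamma(\I))}$ bound on that region, and Lemma~\ref{lem:branch_int} to collapse everything after the first node with an integer LP optimum into an $\mathcal{O}(n)$ tail), with only cosmetically different bookkeeping (you count the subtree $F$ of value-$<\OPT(\I)$ nodes and its ``entry'' children, the paper counts an antichain of leaves at depth $\le 2\Gamma(\I)+1$). One quantitative point in the accounting you defer: since the root has value $\OPT(\I)-\Gamma(\I)$ and each level of $F$ gains at least $\tfrac12$, the depth of $F$ is at most $2\Gamma(\I)-1$, not $2\Gamma(\I)$; with your looser bound, the terms $|F|+(|F|+1)+2B$ (where $B$ is the number of ``bad'' boundary entries, which can be as large as $|F|+1$) only give $2^{2\Gamma(\I)+3}+\mathcal{O}(n)$, whereas the tighter depth bound (equivalently, the observation that all pruned leaves outside the $\mathcal{O}(n)$ tail form an antichain at depth at most $2\Gamma(\I)+1$, so the tree has at most $2^{2\Gamma(\I)+1}+\mathcal{O}(n)$ leaves) recovers the stated constant $2^{2\Gamma(\I)+2}$.
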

\addtocounter{theorem}{-1}
\endgroup

\begin{proof}
Fix any branch-and-bound tree (using strong-branching) for $\I$. Let $N$ be a node at depth $2\Gamma(\I) + 1$. Suppose that no ancestor of $N$ had an optimal LP solution that was integer; it follows from Lemma \ref{lem:branch_frac} that $N$ has optimal objective value at least $\OPT(\I) + \frac{1}{2}$. Denote the set of such nodes $\mathcal{N}_{\text{no ancestor}}$ and note that these nodes are pruned by bound.  Observe all other nodes (if any) at depth $2\Gamma(\I) + 1$ do have such an ancestor. Let $\mathcal{N}_{\text{integer}}$ denote the set of nodes at depth $\leq 2\Gamma(\I)$ that have an optimal solution that is integer, but none of its ancestors have an optimal solution that is integer. Finally, observe that $|\mathcal{N}_{\text{no ancestor}}| + |\mathcal{N}_{\text{integer}}| \leq 2^{2\Gamma(\I) + 1}$. We conclude the proof by observing that Lemma \ref{lem:branch_int} gives: if $N$ is the node in $\mathcal{N}_{\text{integer}}$ with largest depth, then the sub-tree rooted at $N$ has size $\mathcal{O}(n)$ where it finds an integer solution with value $\OPT(\I)$. Therefore, no other nodes in $\mathcal{N}_{\text{integer}}$ will be branched on. Since the number of leaves in this tree is at most $2^{2\Gamma(\I) + 1} + \mathcal{O}(n)$, the result of the theorem follows. 

\end{proof}

\subsection{Proof of Proposition~\ref{prop:makekernelsense} and Theorem~\ref{thm:nt-monot}.}\label{sec:kernels}

\begingroup
\def\theproposition{\ref{prop:makekernelsense}}
\begin{proposition}
Let $\I$ be any instance of vertex cover. Assume we break ties within the worst-bound rule for node selection by selecting a node with the largest depth. Consider a partial tree $\mathcal{TP}_S$ generated by strong-branching with the above version of worst-bound node selection rule. Let $N$ be a node of this tree that is not pruned. If $j \in I(\I,N)$ and we decide to branch on variable $x_j$ at node $N$ (using strong-branching), then 
\begin{enumerate}
\item $\mathcal{TP}_S^B(\I) = \OPT(\I)$.
\item After branching on $x_j$, in at most $\mathcal{O}(n)$ further branchings the algorithm returns an integral optimal solution.
\end{enumerate}
\end{proposition}
\addtocounter{proposition}{-1}
\endgroup

\begin{proof}
We begin by proving property 1. Suppose for sake of contradiction, there exists an open node $N'$ in $\mathcal{TP}_S^B(\I)$ with optimal objective value less than $ \OPT(\I)$. It follows from the worst-bound rule and Lemma \ref{lem:branch_frac} that strong-branching will choose to branch on $j'$ at $N'$ with $j' \not\in I(\I, N')$. Thus $j'$ and $N'$ are not the same as $j$ and $N$, giving the desired contradiction.

Property 2 follows directly from Lemma \ref{lem:branch_int} and the fact that $I(\I,N) = [n]$, since otherwise strong-branching would branch on some $j \not \in I(\I,N)$. 
\end{proof}

\begingroup
\def\thetheorem{\ref{thm:nt-monot}}
\begin{theorem}
Let $\I$ be an instance of vertex cover. Consider any internal node $N$ of $\mathcal{T}_S(\I)$  and let $N'$ be a child node of $N$ that results from branching on $x_v$ where $v \not \in I(\I, N)$. Then, $I(\I, N) \subset I(\I, N')$.
\end{theorem}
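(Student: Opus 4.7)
The plan is to reduce the theorem to a single quantitative inequality about the LP at $N$ and then apply a ``merge'' construction that converts any LP optimum $z^*$ at $N'$ into one whose coordinates on $I(\I, N)$ match a maximal optimum $\hat{x}$ at $N$. Strict containment then follows because $v$ is forced integer at $N'$ but lies outside $I(\I, N)$ by hypothesis. Fix $\hat{x}$ maximal at $N$ and write $V_0, V_1, V_{1/2}$ for the coordinates where $\hat{x}_j$ equals $0, 1, 1/2$ respectively (basic feasible solutions to vertex-cover LPs are half-integral), so $I(\I, N) = V_0 \cup V_1$ and $v \in V_{1/2}$. Feasibility of $\hat{x}$ implies that no edge joins $V_0$ and $V_{1/2}$, since such an edge $uw$ would force $\hat{x}_u + \hat{x}_w = 1/2 < 1$.

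Given any LP optimum $z^*$ at $N'$ with objective value $\nu$, define the merge $z$ by $z_j := \hat{x}_j$ for $j \in I(\I, N)$ and $z_j := z^*_j$ for $j \in V_{1/2}$. Feasibility of $z$ at $N'$ is routine: the branching constraint at $v \in V_{1/2}$ holds because $z_v = z^*_v$; every inherited branching constraint at some fixed $j$ holds because such $j$ lies in $I(\I, N)$ and $\hat{x}_j$ matches the fixed value; and vertex-cover edge constraints are verified case-by-case, the only subtle case---edges between $V_0$ and $V_{1/2}$---being vacuous. The value of $z$ equals $|V_1| + \sum_{j \in V_{1/2}} z^*_j = |V_1| + \nu - \sum_{j \in I(\I, N)} z^*_j$, so $z$ is optimal at $N'$ once we establish the key inequality $\sum_{j \in I(\I, N)} z^*_j \geq |V_1|$. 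Since $z^*|_{I(\I, N)}$ is feasible for the standalone vertex-cover LP on the induced subgraph $G[I(\I, N)]$, it suffices to prove that this sub-LP has optimum $|V_1|$.

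The upper bound on the sub-LP optimum is witnessed by $\hat{x}|_{I(\I, N)}$. For the matching lower bound I would take a dual optimum $(y^*, u^*)$ of the LP at $N$ (with $u^*$ the duals of the bounds $x_j \leq 1$) and use complementary slackness to pin down its structure: the dual edge weights $y^*$ are supported only on edges whose $\hat{x}$-endpoints sum to one, namely the edges between $V_0$ and $V_1$ and the edges within $V_{1/2}$; in particular they vanish on edges within $V_1$ and on edges between $V_1$ and $V_{1/2}$. Since the dual constraint $\sum_{e \ni j} y^*_e - u^*_j = 1$ is tight at every vertex $j$ with $\hat{x}_j > 0$, summing these tight constraints over $j \in V_1$ yields that the total $y^*$-mass on edges between $V_0$ and $V_1$ equals $|V_1| + \sum_{j \in V_1} u^*_j$. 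Restricting $(y^*, u^*)$ to the edges and vertices of $G[I(\I, N)]$ then produces a dual-feasible solution for the sub-LP whose value is exactly $|V_1|$ (the upper-bound duals cancel), and weak duality closes the loop. With the key inequality in hand, the merge $z$ is optimal at $N'$ with $z|_{I(\I, N)}$ integer, hence $I(\I, N) \subseteq I(\I, N')$; strict containment follows from $v \in I(\I, N') \setminus I(\I, N)$. The main obstacle is the LP-duality computation, which requires careful bookkeeping of complementary slackness across the three classes $V_0, V_1, V_{1/2}$ and hinges on the structural fact that dual edge variables vanish on edges within $V_1$ and on edges between $V_1$ and $V_{1/2}$.
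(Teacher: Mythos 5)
Your high-level architecture (a single ``merge'' of $\hat{x}$ with an optimum $z^*$ of $N'$, reduced to the inequality $\sum_{j \in I(\I,N)} z^*_j \geq |V_1|$, then Fact~\ref{fact:max_idx} to conclude) is a genuinely different and more unified route than the paper's, which treats $N_{v,1}$ and $N_{v,0}$ by separate constructions and proves an expansion lemma $|\delta(S) \cap V_0^*| \geq |S|$ followed by a two-stage rounding. The merge itself is fine: your feasibility check (including the observation that no edge joins $V_0$ and $V_{1/2}$) and the reduction to the key inequality are correct.

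The gap is in your proof of the key inequality. You replace the LP at $N$ by the \emph{standalone} vertex-cover LP on $G[I(\I,N)]$, but this discards the equality constraints $x_j = b_j$ inherited from earlier branchings, and the claim that the standalone sub-LP has optimum $|V_1|$ is false in general. Concretely, take the diamond of Figure~\ref{fig:monoton_ex} and the node with $x_b = 1$ branched: there the unique LP optimum is $x_b = 1$, $x_a = x_c = x_d = \tfrac{1}{2}$, so $I = V_1 = \{b\}$ and $V_0 = \emptyset$; the induced subgraph $G[\{b\}]$ is edgeless and its vertex-cover LP has optimum $0 < 1 = |V_1|$. The key inequality itself still holds there, but only because $z^*_b = 1$ is forced by the branching constraint, not by any edge of $G[I(\I,N)]$. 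Your duality bookkeeping breaks at exactly this point: for a vertex fixed by branching, the dual constraint is $\sum_{e \ni j} y^*_e + \mu^*_j - u^*_j \leq 1$ with a free multiplier $\mu^*_j$ for the equality constraint, so your identity for the $y^*$-mass on $V_0$--$V_1$ edges acquires an extra term $\sum_{j \in V_1 \cap F} \mu^*_j$ (in the diamond example $\mu^*_b = 1$ and the $V_0$--$V_1$ mass is $0$, not $|V_1|$), and the restricted dual no longer certifies $|V_1|$. Relatedly, your implicit structural claim that the dual mass reaching each $j \in V_1$ comes from $V_0$--$V_1$ edges requires every $j \in V_1$ to have a neighbor in $V_0$, which is true for free vertices (else lower $\hat{x}_j$ to $\tfrac{1}{2}$ and contradict optimality) but false for branching-fixed ones. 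The repair is to split off the fixed coordinates first --- $z^*_j = 1$ for every $j$ fixed to $1$ contributes $|V_1 \cap F|$ outright --- and run the complementary-slackness argument only on the free part of $V_0 \cup V_1$, where the needed structure does hold; note that the paper's proof sidesteps this by only ever modifying vertices $u \in V_1^*$ with $x'_u \neq 1$, a set that automatically excludes the branching-fixed vertices.
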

\addtocounter{theorem}{-1}
\endgroup

\begin{proof}
Throughout this proof, we use $N_{j, 0}$ to denote the child node of $N$ that results from the branch $x_j = 0$; we use $N_{j, 1}$ similarly. We use $\delta(S)$ to denote the neighbors of any subset of vertices $S \subseteq V$. Also, throughout the proof, let $x^*$ be an optimal solution to $N$ with maximal set of integer components (i.e. $x^*_j \in \{0,1\}$ for all $j \in I(\I, N)$).

We will first consider the child $N_{v, 1}$. Consider the solution $y$ to $N_{v, 1}$ constructed from $x^*$ as follows: $y$ takes the same values as $x^*$, but $y_v = 1$ instead of $\frac{1}{2}$. Note that this is feasible for $N_{v, 1}$. Since $v \not \in I(\I, N)$ we have that $x_v = \frac{1}{2}$ in every optimal solution to $N$. In other words, optimal objective function value of $N_{v,1}$ is at least $\frac{1}{2}$ more than that of $N$. Also, $\ip{1}{y} = \ip{1}{x^*} + \frac{1}{2}$, and so, $y$ is an optimal solution to $N_{v, 1}$. So $I(\I, N) \subset I(\I, N')$ follows in the case of $N' = N_{v, 1}$. 

We will now consider the child $N_{v, 0}$. Let $V_1^* = \{u \in V : x^*_u = 1\}$ and $V_0^*$ be defined similarly. We will need the following technical result later in the proof.

\begin{lemma}\label{lem:neighbors}
$|\delta(S) \cap V_0^*| \geq |S|$ for all $S \subseteq V_1^*$. 
\end{lemma}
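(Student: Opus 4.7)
The plan is to prove this inequality by contradiction via a local LP perturbation (swap) argument. Suppose there exists $S \subseteq V_1^*$ with $|\delta(S) \cap V_0^*| < |S|$. I will construct a feasible solution $y$ to the LP relaxation at node $N$ with $\ip{\ones}{y} < \ip{\ones}{x^*}$, directly contradicting optimality of $x^*$. (Maximality of the integer support will not be needed here; plain optimality suffices, because we are aiming for strict objective improvement.)

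The candidate $y$ would be defined by
\begin{equation*}
y_u = \begin{cases} x^*_u - \epsilon & \text{if } u \in S, \\ x^*_u + \epsilon & \text{if } u \in \delta(S) \cap V_0^*, \\ x^*_u & \text{otherwise,} \end{cases}
\end{equation*}
for a small $\epsilon > 0$ (any $\epsilon < 1/2$ works); note $y \in [0,1]^V$.

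The key step is verifying every edge constraint $y_u + y_v \geq 1$. Edges with no endpoint in $S$ only see coordinates that are unchanged or have strictly increased, so they remain feasible. For an edge $uv$ with $u \in S$, I would split on the label of $v$: the cases $x^*_v = 1/2$ and $x^*_v = 1$ (regardless of whether $v \in S$) are routine and leave at least a slack $1 - 2\epsilon$; the essential case is $x^*_v = 0$, in which $v$ is forced to lie in $\delta(S) \cap V_0^*$, and then $y_u + y_v = (1-\epsilon)+\epsilon = 1$. Once feasibility is in hand, the objective change is
\begin{equation*}
\ip{\ones}{y} - \ip{\ones}{x^*} = \epsilon\bigl(|\delta(S) \cap V_0^*| - |S|\bigr) < 0,
\end{equation*}
the desired contradiction.

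The main obstacle I anticipate is the edge-by-edge case analysis: it is what justifies routing the $+\epsilon$ compensating mass \emph{only} onto $\delta(S) \cap V_0^*$ (rather than all of $\delta(S)$), and it is precisely the $x^*_v = 0$ case that makes the shortfall $|\delta(S) \cap V_0^*| < |S|$ bite, since without enough $0$-neighbors of $S$ to absorb the $\epsilon$-drop on $S$, feasibility would fail. This is essentially a Hall-type condition linking $V_1^*$ and $V_0^*$ through the edges of $N$.
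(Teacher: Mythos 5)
Your proof is correct and is essentially the same argument as the paper's: both derive a contradiction with the optimality of $x^*$ by decreasing the values on $S$ and compensating only on $\delta(S)\cap V_0^*$, checking feasibility edge by edge and observing that the net cost change is negative precisely when $|\delta(S)\cap V_0^*| < |S|$. The only cosmetic difference is that the paper fixes the perturbation to $\tfrac{1}{2}$ (setting both $S$ and $\delta(S)\cap V_0^*$ to $\tfrac{1}{2}$) whereas you use a generic small $\epsilon$.
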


\begin{proof}
Assume, for the sake of contradiction, there is a subset $S \subseteq V_1^*$ such that $|\delta(S) \cap V_0^*| < |S|$, then it must hold that $x^*$ is not an optimal solution. This is because, we can set all of $S$ and $\delta(S) \cap V_0^*$ to be $\frac{1}{2}$. This remains feasible, since we are only decreasing the value of the vertices in $S$, but all of the vertices with value $0$ adjacent to these vertices are also being raised to $\frac{1}{2}$. It also decreases the cost, since $\frac{1}{2} (|\delta(S) \cap V_0^*| + |S|) < |S|$. 
\end{proof}

Let $x'$ be any optimal solution to LP corresponding to $N_{v, 0}$. Let $\Phi = \{u \in V_1^* : x'_u \not = 1 \}$. We will construct a feasible solution $z$ such that $z_u = 1$ for all $u \in V_1^*$ with cost no more than
that of $x'$. Note that since $\delta(V_0^*) \subseteq V_1^*$, this implies the existence of an optimal solution to $N_{v, 0}$ with the same integer variables as $x^*$ (since any optimal solution with all variables in $V_1^*$ set to $1$ will have all variables in $V_0^*$ set to $0$). 

We begin by constructing an intermediate solution $y$. Notice $\Phi$ can be partitioned into $\Phi^0 = \{u \in \Phi : x'_u = 0\}$ and $\Phi^{\frac{1}{2}} = \{u \in \Phi : x'_u = \frac{1}{2}\}$. Let $y$ be defined by setting the variables in $\Phi^0$ to $\frac{1}{2}$ and setting the variables in $\delta(\Phi^0) \cap V_0^*$ to $\frac{1}{2}$ (note that $x'_u = 1$ for all $u \in \delta(\Phi^0) \cap V_0^*$). This maintains feasibility, since the variables with decreasing value are a subset of $V_0^*$, which are only adjacent to vertices in $V_1^*$ which now all have value at least $\frac{1}{2}$ in $y$. This also maintains optimality, since the cost increases by $\frac{1}{2}|\Phi^0|$ and decreases by $\frac{1}{2}|\delta(\Phi^0) \cap V_0^*|$, and by Lemma \ref{lem:neighbors}, $|\delta(\Phi^0) \cap V_0^*| \geq |\Phi^0|$. We now construct $z$ similarly, from this intermediate solution $y$. Notice now, that all variables in $\Phi$ have value $\frac{1}{2}$ in $y$. We construct $z$ by setting all of these variables to $1$ and setting all variables in $\delta(\Phi) \cap V_0^*$ to $0$ (note that $y_u \geq \frac{1}{2}$ for all $u \in \delta(\Phi) \cap V_0^*$). This maintains feasibility, since the variables with decreasing value are a subset of $V_0^*$, which are adjacent only to vertices in $V_1^*$, which all have value at least $1$ in $z$. This also maintains optimality, since the cost increases by $\frac{1}{2}|\Phi|$ and decreases by at least $\frac{1}{2}|\delta(\Phi) \cap V_0^*|$, and by Lemma \ref{lem:neighbors}, $|\delta(\Phi) \cap V_0^*| \geq |\Phi|$. Finally, we note that $z$ maintains feasibility for $N_{v, 0}$, since $x'_v = 0$ and therefore $x'_u = 1$ for all $u \in \delta(v)$; since $\delta(v) \cap \Phi =  \emptyset$, all vertices $v$ and $\delta(v)$ keep their value in $z$. This concludes the proof. 

\end{proof}


\subsection{Proof of Theorem \ref{thm:sep_example}.}\label{sec:sb_greedy_ex}

In this section, we will demonstrate an instance of vertex cover where the worst tree can have $2^{\Omega(n)}$ times as many nodes as the tree generated by strong-branching. Here we assume that the LP solver finds a basic feasible solution at every node, but the one with the \textit{most} fractional components.

\paragraph{The instance.} Here we refer to a \textit{triangle} as a $K_3$ and to a \textit{diamond} as a $K_4$ minus an edge; see Figure \ref{fig:graphs}. Let $G$ be a union of $m$ triangles and $\frac{1}{2}m$ diamonds. We will consider the instance of finding the maximum unweighted vertex cover on $G$. Observe the following properties of this instance:
\begin{itemize}
    \item The number of vertices is $5m$.
    \item An optimal IP solution picks both vertices of degree $3$ in each diamond and any two vertices in each triangle, so the optimal IP value is $3m$. This also implies that, at the root node, the maximal set of integer variables $I$ is the set of vertices that make up the diamonds.
    \item An optimal LP solution sets all vertices to have value $\frac{1}{2}$, so the optimal LP value is $\frac{5}{2}m$. 
    \item The additive integrality gap of the instance is then $\frac{1}{2}m$. 
\end{itemize}

\begin{figure}[ht]
\centering
\includegraphics[width=0.35\textwidth]{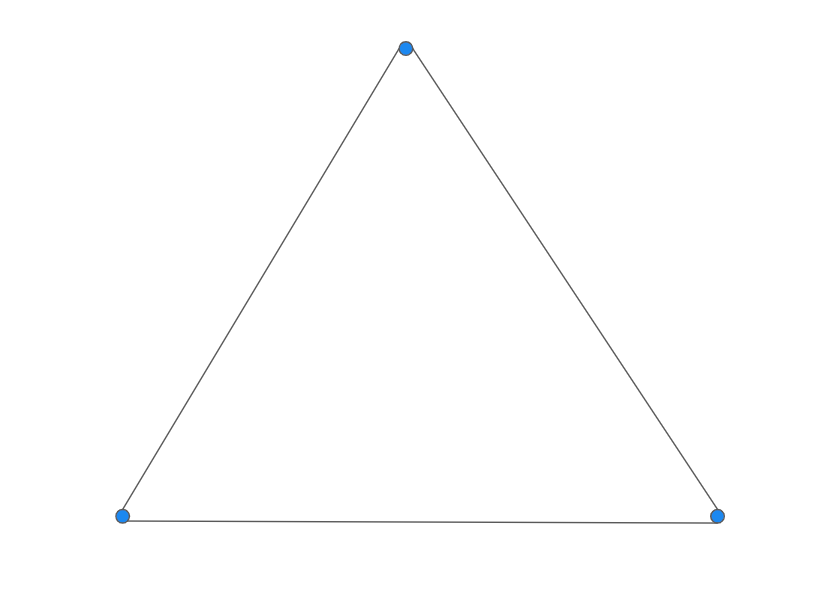}
\hspace{0.08\textwidth}
\includegraphics[width=0.4\textwidth]{diamond.png}
\caption{We refer to the component on the \textbf{left} as a \textit{triangle} and the component on the \textbf{right} as a \textit{diamond}. We label the vertices of the diamond for ease of following the arguments of this section. }
\label{fig:graphs}
\end{figure}

\paragraph{Branching on a diamond.} Consider any diamond, and suppose all of its vertices are set to $\frac{1}{2}$ in the LP optimal solution. Let $a$ be a vertex of degree $3$ and $b$ be a vertex of degree $2$; see Figure \ref{fig:graphs}. First, we consider the case of branching on $a$. Setting $x_a = 0$, the only feasible solution is to set the three other vertices in the diamond to $1$. This increases the objective value by $1$. Setting $x_a = 1$, the only optimal solution sets $x_c = 1$ and $x_b = x_d = 0$. This results in no objective value change. Therefore, $\text{score}_{P}(a) = 0$. Now, we consider the case of branching on $b$. Setting $x_b = 0$, any feasible solution sets $x_a = x_c = 1$ and the only optimal solution then sets $x_d = 0$. This results in no change to the objective value. Setting $x_v = 1$, the only optimal solution sets $x_a = x_c = x_d = \frac{1}{2}$. This results in an objective value increase of $\frac{1}{2}$. Therefore, $\text{score}_{P}(b) = 0$. 

\paragraph{Branching on a triangle.} Consider any triangle, and suppose all of its vertices are set to $\frac{1}{2}$ in the LP optimal solution. Let $u$ be any vertex of the triangle. Setting $x_u = 0$, the only optimal solution is to set the other two vertices to $1$. This results in an objective value increase of $\frac{1}{2}$. Setting $x_u = 1$, any optimal basic feasible solution sets one of the other two vertices to $1$ and the other to $0$. This results in an objective value increase of $\frac{1}{2}$. Therefore, $\text{score}_{P}(u) = \frac{1}{4}$. 

\paragraph{Strong branching.} Observe that, given an instance with at least one diamond with all of its vertices set to $\frac{1}{2}$ and at least one triangle with all of its vertices set to $\frac{1}{2}$, strong-branching will choose to branch on a vertex in such a triangle. Then, observe that each node on the $m$-th level of the strong-branching tree ''looks the same'': its LP optimal solution has an integer value for each vertex in every triangle, in particular, sets two vertices to $1$ and one vertex to $0$; and all other vertices (i.e. all vertices in the diamonds) are set to $\frac{1}{2}$. Observe that each of these nodes has LP value the \textit{same} as the optimal IP value. Since each node on the $m$-th level has an optimal solution that is integer, it follows from Lemma \ref{lem:branch_int} that strong-branching results in a tree of size at most $2^{m + 1} + \mathcal{O}(m)$.

\paragraph{A greedy rule.} Consider instead the \textit{greedy} variable selection rule: branch on the vertex with most fractional neighbors. Observe that, given an instance with at least one diamond with all of its vertices set to $\frac{1}{2}$ and at least one triangle with all of its vertices set to $\frac{1}{2}$, greedy will choose to branch on a vertex with degree $3$ in such a diamond (without loss of generality, we assume its vertex $a$). Now, consider the nodes on the $\frac{1}{2}m$-th level of the tree generated by greedy. Each node's optimal LP solution has an integer value for each vertex in every diamond, and all other vertices (i.e. all vertices in the triangles) are set to $\frac{1}{2}$. In particular, consider any specific node on this level and suppose that, on the path from the root to this node in the tree, $k$ vertices are set to $1$ and $\frac{1}{2}m - k$ vertices are set to $0$. Then, this node has objective value $\OPT(L(\I)) + (\frac{1}{2}m - k)$. Notice then that the sub-tree at this node will have to branch on $2k + 1$ triangles and the resulting $2^{2k + 1}$ leaves will have objective value $\OPT(L(\I)) + (\frac{1}{2}m - k) + \frac{1}{2}(2k + 1) = \OPT(L(\I)) + \frac{1}{2}m + \frac{1}{2} > \OPT(\I)$ (branching on any fewer triangles will not allow the sub-tree to be pruned). On the $\frac{1}{2}m$-th level, there are $\binom{m/2}{k}$ nodes with $k$ vertices set to $1$ on its path from the root. Then, the final number of nodes in the tree is more than
\begin{align*}
    \sum_{k = 1}^{m/2} \binom{m/2}{k} 2^{2k}.
\end{align*}
We can bound this quantity using the Binomial Theorem. In particular, we use the form
\begin{align}
    (x + y)^n  = \sum_{k = 0}^n \binom{n}{k} x^k y^{n - k}. \label{eq:binom}
\end{align}
Plugging $n = \frac{1}{2}, x = 4, y = 1$ into (\ref{eq:binom}), we have the bound 
\begin{align}
    \sum_{k = 0}^{m/2} \binom{m/2}{k} 4^k = 5^{m/2} \geq 2^{1.15 m}. \label{eq:final_bound}
\end{align}


\section{Proof of Proposition \ref{prop:bdg_ub} and Corollary \ref{cor:bdg_lb}: A bad example}\label{sec:bdg_ex}

\begingroup
\def\theproposition{\ref{prop:bdg_ub}}
\begin{proposition}
For any integer program $\I$ with $n$ binary variables, there is an equivalent integer program that uses an extended formulation of the feasible region of $\I$ with $2n$ binary variables, which we refer to as $BDG(\I)$, that has the following property: there exists a branch and bound tree $\mathcal{T}^*(BDG(\I))$ that solves the instance $BDG(\I)$ and $|\mathcal{T}^*(BDG(\I))| \leq 4n + 1$.
\end{proposition}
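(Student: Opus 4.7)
The plan is to describe the BDG extended formulation of \cite{bodur2017cutting} explicitly and then exhibit a caterpillar-shaped branch-and-bound tree that solves $BDG(\I)$ in at most $4n+1$ nodes. The arithmetic identity $4n+1 = 2(2n)+1$ is exactly the node count of a caterpillar of depth $2n$, so the target is a tree that branches on at most $2n$ binary variables along a single root-to-leaf path, with one child of every internal node pruned immediately (either by infeasibility or, via Assumption~\ref{assum:int}, because an integer-optimal LP solution is found).

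First, I would recall the construction. Starting from $\I$ with binaries $x \in \{0,1\}^n$ and feasible region $X$, the BDG formulation introduces $n$ auxiliary binary variables $y_1,\ldots,y_n$ together with (possibly exponentially many) coupling inequalities whose defining property is split-rank one in the extended space: a single round of the split disjunctions $(y_i \le 0) \vee (y_i \ge 1)$ for $i = 1,\ldots,n$ already produces the integer hull of $BDG(\I)$. Informally, once the $y$-variables are fixed to integer values, the remaining LP is guaranteed to admit an integer-feasible optimum, which is precisely the kind of structural fact one can exploit to get pruning.

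Next, I would specify the tree. Order the $y$-variables and branch first on $y_1$ at the root, then on $y_2$ at the unique non-pruned child, and so on up to $y_n$, producing a caterpillar of depth at most $n$ on the $y$-coordinates. The key claim to verify is that at every level, one of the two children has an LP relaxation whose optimum is integer-feasible in the full $(x,y)$ space (and therefore gets pruned by Assumption~\ref{assum:int}), while the other child remains open. Should the single surviving leaf after branching on all of $y$ still be fractional in some $x$-coordinates, the same caterpillar argument is continued on $x_1,\ldots,x_n$, giving an overall depth of at most $2n$. Counting nodes yields at most $2n$ internal nodes and $2n+1$ leaves, for a total of at most $4n+1$ nodes.

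The main obstacle is to translate the global split-rank-one guarantee into the sequential, level-by-level pruning argument required by branch-and-bound. A priori, split rank one only says that applying all $n$ $y$-disjunctions \emph{simultaneously} produces the integer hull; it does not by itself guarantee that there exists an ordering in which fixing the $y_i$'s one at a time yields a prunable child at every step. The hard part of the proof will therefore be to exhibit such an ordering (or to show that an adversarial ordering still works) by exploiting the explicit form of the BDG coupling inequalities, verifying that after each partial assignment $y_1 = b_1, \ldots, y_i = b_i$ the residual LP on one of the two children of the next branch has an integer optimum. Once this sequential pruning property is established, the caterpillar structure and the $4n+1$ bound follow directly.
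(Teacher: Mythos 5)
There is a genuine gap: the pruning mechanism you rely on is not the one that actually works for the BDG formulation, and the property you would need is false. Recall the explicit construction from \cite{bodur2017cutting}: $Q$ is the convex hull of the points $(x,y)$ where $x$ ranges over the vertices of $P$ and $y_i = 1$ if $x_i \in \{0,1\}$, $y_i = 0$ if $x_i \in (0,1)$. Your caterpillar branches on $y_1,\dots,y_n$ and asserts that at each level one of the two children has an integer-optimal LP solution and is pruned via Assumption~\ref{assum:int}. But the $y_i = 0$ child generally has \emph{no} integer-feasible point at all in the $x_i$ coordinate: every vertex of the face $Q \cap \{y_i = 0\}$ has $x_i$ strictly fractional, so that node can be neither pruned by integrality nor, in general, pruned by bound (its LP value can exceed the IP optimum, as it does for the cross-polytope used in Corollary~\ref{cor:bdg_lb}). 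The same observation refutes your claim that ``once the $y$-variables are fixed to integer values, the remaining LP is guaranteed to admit an integer-feasible optimum'': that holds only for $y = (1,\dots,1)$. The paper's tree instead branches \emph{further} on $x_i$ at the $y_i = 0$ node, and both resulting grandchildren are \emph{infeasible}, precisely because no point of $Q \cap \{y_i = 0\}$ has $x_i \in \{0,1\}$ (the minimum and maximum of $x_i$ over that face are attained at vertices, hence lie in $(0,1)$). This gives the count $1 + 2n + 2n = 4n+1$: the root, the two children of each $y_i$-branching along the path, and the two infeasible children of each $x_i$-branching hanging off the $y_i=0$ side; the terminal node $y = (1,\dots,1)$ is integral by construction, so no further branching on the $x$ variables is needed.

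Separately, your appeal to the split-rank-one property is a detour that cannot be completed as stated: split rank one is a statement about applying all $n$ split disjunctions to the LP relaxation simultaneously and does not yield the sequential, one-variable-at-a-time pruning you need. The paper never uses split rank; it uses only the combinatorial description of the vertices of $Q$ above. You correctly flag this translation as ``the hard part,'' but the resolution is not to find a clever ordering of the $y_i$'s --- it is to abandon the requirement that one child be prunable immediately and instead accept one extra layer of branching on $x_i$ to expose infeasibility.
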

\addtocounter{proposition}{-1}
\endgroup

\begin{proof}
Consider the instance $\I$:
\begin{align*}
    \textup{max} &\quad c^{\top}x  \\
    \textup{s.t.} &\quad x \in P,\; x \in \{0, 1\}^n.  \tag{$\I$}
\end{align*}
where $P \subseteq [0,1]^n$ is a polytope. In~\cite{bodur2017cutting}, Bodur, Dash and Gunluk construct the  extended formulation $Q \subseteq [0,1]^{2n}$ of $P$ as follows. For every vertex $x$ of $P$, construct a vertex $(x, y)$ of $Q$ where
$$y_i = \begin{cases}
1 & \text{ if } x_i \in \{0,1\} \\
0 & \text{ if } x_i \in (0,1)
\end{cases}.$$
Define $Q$ to be the convex hull of these vertices, we call this the \emph{BDG extended formulation for $P$}. We construct the equivalent IP, $BDG(\I)$,  as follows:
\begin{align*}
    \textup{max} &\quad c^{\top}x  \\
    \textup{s.t.} &\quad (x, y) \in Q ,\; x \in \{0, 1\}^n,\; y \in \{0, 1\}^. \tag{$\textup{BDG}(\I)$}
\end{align*}

For any IP $\I$, there exists a branch-and-bound tree with at most $4n + 1$ nodes that solves $BDG(\I)$. However, this tree does not remove the current LP optimal fractional point when branching. See Figure \ref{fig:bdg}. 

\begin{figure}[h]
    \centering
    \includegraphics[width=12cm]{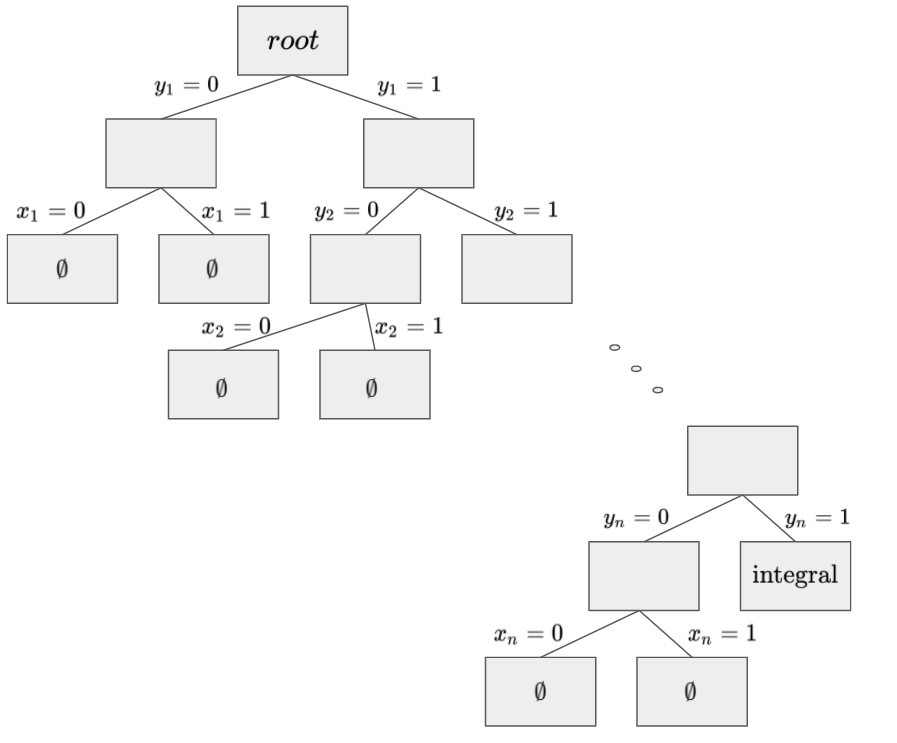}
    \caption{Branch-and-bound on BDG extended formulation}
    \label{fig:bdg}
\end{figure}

This follows since, by the definition of $Q$, all of its vertices that have $y_j = 0$ must have $x_j \in (0,1)$. Therefore, the branch that has $y_j = 0$ and $x_j = 0$ (and similarly $x_j = 1$) must be empty. Also, note that the branch that has $y_1 = 1, ..., y_n = 1$ must be integral. 
\end{proof}

\begingroup
\def\thecorollary{\ref{cor:bdg_lb}}
\begin{corollary}
There exists an instance $\I^*$ with $2n$ binary variables, such that the following holds: Let $\mathcal{T}(\I^*)$ be any tree that solves $\I^*$ satisfying the following property: if $x$ is the optimal solution to an internal node $N$ of $\mathcal{T}(\I^*)$, then the variable $j$ branched on at $N$ must be such that $x_j \in (0,1)$. Then, $|\mathcal{T}(\I^*)| \geq 2^{n +1} -1$.   In particular, if $\mathcal{T}_S(\I^*)$ is a branch-and-bound tree generated using strong-branching that solve $\I^*$, then $|\mathcal{T}_S(\I^*)| \geq 2^{n +1} - 1$.  On the other hand,  there exists a tree $\mathcal{T}^*$ that solves $\I^*$ such that $|\mathcal{T}^*(\I^*)| \leq 4n + 1$.
\end{corollary}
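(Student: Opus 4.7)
The plan is to take $\I$ to be the cross-polytope instance of~\cite{dey2021lower} on $n$ binary variables, and to set $\I^* := BDG(\I)$, which has $2n$ binary variables. The two features of $\I$ that I will exploit are: \emph{(i)} every vertex of its LP polytope $P$ lies strictly inside $(0,1)^n$, so $P \cap \{0,1\}^n = \emptyset$ and $\I$ is infeasible; and \emph{(ii)} any branch-and-bound tree that certifies infeasibility of $\I$ has at least $2^n$ leaves, and hence at least $2^{n+1}-1$ nodes (this is precisely the lower bound of~\cite{dey2021lower}). The upper bound $|\mathcal{T}^*(\I^*)| \leq 4n+1$ then follows immediately from Proposition~\ref{prop:bdg_ub}, so the entire content of the corollary lies in the lower bound on $|\mathcal{T}(\I^*)|$.

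The first step of the proof will be the structural identity $Q := BDG(P) = P \times \{\mathbf{0}\}$. By the recipe defining $BDG$, each vertex $v$ of $P$ lifts to $(v, y(v))$ with $y(v)_i = 1$ iff $v_i \in \{0,1\}$, and by~\emph{(i)} this forces $y(v) = \mathbf{0}$ for every vertex of $P$; taking convex hulls preserves this, so $Q \subseteq \{y = \mathbf{0}\}$. Consequently, at every node $N$ of any branch-and-bound tree on $\I^*$, letting $F_N$ denote the face of $[0,1]^{2n}$ cut out by the root-to-$N$ branching constraints, every optimum of the LP $\max\{c^\top x : (x,y) \in Q \cap F_N\}$ has $y = \mathbf{0}$, \emph{regardless of which LP solver is used}. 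In particular, the $y$-variables are always integer at the LP solution returned at every node.

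Now consider any tree $\mathcal{T}(\I^*)$ satisfying the fractional-branching hypothesis. By the previous paragraph it can never branch on a $y$-variable, and is therefore a pure $x$-branching tree. Since $Q \cap F_N$ is LP-infeasible iff $P \cap \pi_x(F_N)$ is (where $\pi_x$ projects out $y$), and since $\I$ has no integer feasible point (so every leaf can only be pruned by LP infeasibility), the very same tree, with $y$-coordinates forgotten, is a valid infeasibility-certifying branch-and-bound tree for $\I$; by~\emph{(ii)} it must therefore have at least $2^{n+1} - 1$ nodes. For the specific statement about strong-branching, I would next verify that under the product score with $\epsilon = 0$ and the convention $0 \cdot \infty = 0$, every $y_j$ receives score $0$ at every node -- branching $y_j = 0$ leaves the LP unchanged while $y_j = 1$ renders it infeasible since $Q \subseteq \{y = \mathbf{0}\}$, giving $\textup{score}_P(y_j) = \max(\infty, 0) \cdot \max(0, 0) = 0$ -- whereas any fractional $x_j$ has strictly positive product score, so strong-branching is forced to pick a fractional $x$-variable and thus inherits the $\geq 2^{n+1} - 1$ bound.

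The main hurdle will be pinning down the structural identity $Q = P \times \{\mathbf{0}\}$ and using it to show that the $y$-variables are never fractional at any LP optimum -- once this is in hand, the lower bound reduces cleanly to the cross-polytope bound of~\cite{dey2021lower}. A minor but essential subtlety is the product-score convention $0 \cdot \infty = 0$: without it, strong-branching could be tricked into picking a $y$-variable whose $y_j = 1$ branch is trivially infeasible (giving $\Delta^+_j = +\infty$) and thus acquire an artificially high score over a genuinely informative fractional $x$-variable.
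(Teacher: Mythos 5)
Your overall skeleton is the same as the paper's: take $\I^*$ to be the BDG lift of the cross-polytope of~\cite{dey2021lower}, argue that a tree obeying the fractional-branching restriction can never branch on a $y$-variable, project the resulting $x$-only tree down to an infeasibility-certifying tree for the cross-polytope itself to invoke the $2^n$-leaf lower bound, and get the $4n+1$ upper bound from Proposition~\ref{prop:bdg_ub}. However, the load-bearing structural claim in your argument --- that every vertex of the cross-polytope lies strictly inside $(0,1)^n$, hence $Q = P \times \{\mathbf{0}\}$ --- is false, and it cannot be repaired by tweaking the instance. The cross-polytope used for the $2^n$ lower bound is (up to normalization) $\{x \in [0,1]^n : \sum_i |x_i - \tfrac{1}{2}| \le \tfrac{n-1}{2}\}$, whose vertices have $n-1$ coordinates in $\{0,1\}$ and a single coordinate equal to $\tfrac{1}{2}$. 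This is forced: for the lower bound to hold, the LP must remain feasible after fixing any $n-1$ variables to $0/1$ values, so $P$ must meet one-dimensional faces of the cube, and therefore has vertices with many integer coordinates. Consequently $Q \not\subseteq \{y = \mathbf{0}\}$ (every vertex of $Q$ has exactly one $y_i = 0$ and the rest equal to $1$), the assertion that every node LP optimum has $y = \mathbf{0}$ fails, and your strong-branching computation $\textup{score}_P(y_j) = \max(\infty,\,0)\cdot\max(0,\,0) = 0$ collapses, since the branch $y_j = 1$ is generally feasible (it retains all lifted vertices $(v, y(v))$ with $v_j \in \{0,1\}$, which is most of them).

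The correct mechanism, and the one the paper uses, is weaker but sufficient: by construction of the BDG lift, the $y$-coordinates of \emph{every vertex} of $Q$ are integral (though not identically zero), so the $y$-variables are not fractional in the vertex optimum returned by the LP solver, and branching on $y_j$ never removes the current optimal vertex; the fractional-branching hypothesis (and strong-branching, which only scores fractional variables) therefore rules out branching on $y$. Your subsequent projection step --- that an $x$-only tree for $\I^*$ yields a valid infeasibility proof for $P$ because $Q \cap F_N$ is empty iff $P \cap \pi_x(F_N)$ is, and no leaf can be pruned by integrality or bound since $P$ contains no integer point --- is correct and is essentially what the paper does implicitly. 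So the fix is to delete the identity $Q = P\times\{\mathbf{0}\}$ and everything resting on it, and replace it with the integrality of the $y$-coordinates of the vertices of $Q$ (noting, if you want to be fully rigorous, that this property must also be tracked at non-root nodes, where the LP optimum is a vertex of $Q$ intersected with a face of the cube rather than of $Q$ itself).
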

\addtocounter{corollary}{-1}
\endgroup

\begin{proof}
Note that the $y$ variables are not fractional in any vertex of $Q$. So when the tree of Figure \ref{fig:bdg} branches on a $y$ variable, it does not remove the current LP optimal fractional point (because it does not remove any vertex of $Q$). 

Now suppose we restrict ourselves to branching on a variable that does remove the current optimal point. Such a tree would only branch on $x$ variables (i.e. the original variables). Then, if $P$ is the $n$-dimensional cross polytope~\cite{dey2021lower}, and $Q$ is its BDG extended formulation, we know that branching on only the $x$ variables will require a tree of size at least $2^{n +1} -1$, as shown in \cite{dey2021lower}. 

The final statement follows from Proposition~\ref{prop:bdg_ub}.
\end{proof}


\section{Computing an optimal branch-and-bound tree}\label{sec:computing_opt}
\subsection{Proof of Observation~\ref{obs:assumption}.}\label{sec:opttreewelldefined}
\begingroup
\def\theobservation{\ref{obs:assumption}}
\begin{observation}
When using an LP solver which satisfies Assumption \ref{assum:int}, a branch-and-bound tree using the worst bound rule  never branches on a node whose optimal objective function value is equal to that of the IP solver. 
\end{observation}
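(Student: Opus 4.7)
The plan is to show that any node $N$ with LP relaxation value equal to $\OPT(\I)$ is always pruned when processed---either by integrality or by bound---so it is never branched on.

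I would begin by splitting into two cases depending on whether the subproblem at $N$ contains an integer feasible solution of value $\OPT(\I)$. In the first case, such a solution is LP-feasible at $N$ with objective $\OPT(\I)$ and hence is an optimal LP solution at $N$; Assumption~\ref{assum:int} then forces the LP solver to return an integer optimum, and $N$ is pruned by integrality. In the second case, every integer feasible point in $N$'s region has value strictly below $\OPT(\I)$, so the LP solver returns a fractional optimum, and I would instead argue that $N$ is pruned by bound.

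For the latter, I would show that the incumbent must already equal $\OPT(\I)$ by the time $N$ is processed. Suppose for contradiction that it does not; then a global integer optimum $x^*$ has not yet been recorded and must lie in the feasible region of some still-open node $N^*$. Since $x^*$ is LP-feasible at $N^*$ with objective $\OPT(\I)$, the LP value at $N^*$ is at least $\OPT(\I)$; combined with the worst bound rule (which picks $N$, an open node of maximum LP value, namely $\OPT(\I)$), the LP value at $N^*$ must equal $\OPT(\I)$, so $x^*$ is an integer LP optimum at $N^*$. Under a tiebreaking convention within the worst bound rule that processes nodes admitting integer LP optima first among equal-bound open nodes---consistent with Assumption~\ref{assum:int}---$N^*$ would have been processed before $N$, raising the incumbent to $\OPT(\I)$ and contradicting the assumption. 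Hence when $N$ is processed the incumbent already equals $\OPT(\I)$, and $N$ is pruned by bound.

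The main obstacle I anticipate is pinning down the tiebreaking rule used within the worst bound rule, since two open nodes may share LP value $\OPT(\I)$ yet differ in whether their LP admits an integer optimum. Because Assumption~\ref{assum:int} is introduced precisely to remove LP-solver-induced ambiguity in the definition of the optimal tree, the natural convention---favoring, among tied open nodes, those whose LP admits an integer optimum---fits the setting and validates the observation. With this convention in place, no node with LP value $\OPT(\I)$ is ever branched on, which is exactly what underpins the well-definedness of the optimal branch-and-bound tree discussed in the surrounding text.
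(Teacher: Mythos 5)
Your proof is correct and follows essentially the same route as the paper's: split on whether the node's subproblem contains an integer feasible point of value $\OPT(\I)$, prune by integrality via Assumption~\ref{assum:int} in the first case, and in the second case argue that the global integer optimum sits in another tied node whose LP returns an integer solution, so the incumbent reaches $\OPT(\I)$ before the current node is branched. The only difference is that you make explicit the tie-breaking among equal-bound open nodes that the paper's phrase ``the LP solver will discover this solution before branching on the current node'' leaves implicit; that is a fair and arguably necessary clarification, not a different argument.
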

\addtocounter{observation}{-1}
\endgroup

\begin{proof}
Consider a linear program at a node whose optimal function value is equal to that of the MILP optimal objective function value. There are two cases to consider. Case 1: if there exists an integral optimal solution to the LP relaxation at a given node, then the LP solver finds it and the node is pruned. Case 2: If not, then this node does not contain an integral solution with objective function value equal to that of the MILP optimal objective function value. This implies that there must exist another node whose feasible region contains an integer feasible solution with the same objective function value as that of the MILP optimal objective function value. In particular, this implies that there must exist a node whose optimal linear programming solution is such an  integer feasible solution. Since the LP solver will discover this solution before branching on the current node, we will never branch on the current node.
\end{proof}
%

 
\subsection{The dynamic programming algorithm to compute optimal branch-and-bound tree.} \label{sec:dpalgo}
We will now present the algorithm to compute the size of an optimal branch-and-bound tree for a fixed IP $\max_{x \in P \cap \{0,1\}^n}\ip{c}{x}$ under Assumption~\ref{assum:int}. Let $\mathcal{F}$ denote the set of faces of $[0,1]^n$ and note that each face can be defined as a string in $\{ \star , 0,  1\}^n$. For example, $(0, \star  ,1)$ denotes the face $\{x \in [0, 1]^3 : x_1 = 0, x_3 = 1\}$. Thus, $|\mathcal{F}| = 3^n$. Also, $\mathcal{F}$, is in one-to-one correspondence with all the possible nodes in the branch-and-bound tree. Let $\OPTn(F)$ denote the size of the optimal branch-and-bound tree for the sub-problem restricted to $F$, i.e., $\max_{x \in F \cap P \cap \{0,1\}^n}\ip{c}{x}$.  

Based on Assumption~\ref{assum:int} and Observation~\ref{obs:assumption},  with the WDB rule for node selection, a node in the branch-and-bound tree is pruned if and only if it is either infeasible, or the optimal objective function value of its LP relaxation is less than or equal to the optimal MILP optimal objective value. In Phase-1 of our algorithm (Algorithm \ref{dpalgo}), this fact is used to identify nodes that are pruned by infeasibility or by bound, thus, $\OPTn(F) = 0$ for corresponding faces.

Now, given a face $F$ that is not pruned in the branch-and-bound tree, and variable $x_j$ that is free in $F$, define $F_{j,0}, F_{j,1}$ to be the faces of $F$ that result from fixing $x_j$ to $0$ and $1$ respectively, i.e. $F_{j, 0} = \{x \in F : x_j = 0\}$. The fact that the optimal sub-tree at the node corresponding to $F$ branches on the variable that produces two child nodes having the smallest optimal sub-trees, leads to the following recurrence relation, 
$$\OPTn(F) = 1 + \min_{j \in J_F} \left\{\OPTn(F_{j, 0}) + \OPTn(F_{j,1}) \right\},$$ 
where $J_F$ denotes the set of variables that are free in $F$. We use this recurrence relation in the bottom-up computation of $\OPTn(F)$ for the remaining faces (i.e. faces where  $\OPTn(F) \neq 0$) in $\mathcal{F}$ as Phase-2 of the algorithm. Thus, it can be inductively seen that the algorithm is correct. Additionally, the actual branch-and-bound tree can be found by storing $\arg \min_j \left\{\OPTn(F_{j, 0}) + \OPTn(F_{j,1})\right\}$ at every iteration. 

\begin{algorithm}
\caption{Computing Optimal Branch-and-bound Tree} \label{dpalgo}
\begin{algorithmic}[1]
\Statex \textbf{Phase-1:} Pruning by Infeasibility or Bound
\State Solve $\max_{x \in P \cap \{0,1\}^n}\ip{c}{x}$; let $x^*$ be the solution
\State Initialise: $\mathcal{S} \leftarrow \mathcal{F}$
\For{$F$ in $\mathcal{S}$}
    \State Solve $\max_{x \in F \cap P} \ip{c}{x}$; let $x_F^*$ be the optimal solution ($x_F^* = \emptyset$ if LP is infeasible)
    \If {$x_F^* = \emptyset$ \textbf{or} $\ip{c}{x_F^*} \le \ip{c}{x^*}$ }
        \State $\OPTn(F) \leftarrow 0$
        \State $\mathcal{S} \leftarrow \mathcal{S} \setminus \{F\}$
    \EndIf
\EndFor
\Statex
\Statex
\textbf{Phase-2:} Recursive bottom-up computation
\State Sort $\mathcal{S}$ in order of increasing dimension
\For{$F$ in $\mathcal{S}$}
    \State $\OPTn(F) \leftarrow 1 + \min_j (\OPTn(F_{j, 0}) + \OPTn(F_{j,1}))$
\EndFor
\State  \textbf{return} $\OPTn([0,1]^n)$
\end{algorithmic}
\end{algorithm}

\textcolor{black}{Notice it takes $2^{O(n)}$ time to execute line 1 and $\textup{poly}( data)$ time to execute line 3 of Algorithm \ref{dpalgo} for a particular face, where $\textup{poly}(data)$ is the running time for solving an LP.  Therefore, Phase I takes at most $2^{O(n)} + \textup{poly}(data) \cdot 3^n$ time.
Also notice Phase-2 takes $n \cdot 3^n$ time; this is because line 11 of Algorithm \ref{dpalgo} takes at most $n$ comparisons. So, in total Phase-1 and Phase-2 take $ 2^{O(n)} + (\textup{poly}(data) + n) \cdot 3^n = \textup{poly}(data)\cdot3^{O(n)}$ time. }


\subsection{Implementation enhancements}
\paragraph{Cascading 0-nodes} Observe that for $F_1, F_2 \in \mathcal{F}$ such that $F_2 \subset F_1$, if $\OPTn(F_1) = 0$, then $\OPTn(F_2) = 0$ as well. Thus,  at the start of Phase-1, faces in $\mathcal{F}$ are arranged in decreasing order of their dimension and upon finding a face, $F_1$, that is infeasible or pruned by bound, all other faces that are contained in $F_1$ can be removed from $\mathcal{S}$.

\paragraph{Parallelization} Phase-1 of the proposed algorithm can be directly parallelized, as each face can be independently solved. However, to benefit from Cascading 0-nodes, we group faces that have the same values from $\{0, 1, \star\}$ for the first $n_1$ dimensions and run Phase-1 for the resulting $3^{n_1}$ groups independently. Note that this also reduces peak memory usage as all $3^n$ faces in $\mathcal{F}$ are not required to be generated at the start of the algorithm and the remaining $n - n_1$ dimensions are generated on the fly for each group. In our experiments, we set $n_1 = \lfloor \frac{n}{2} \rfloor$. In addition, this manner of grouping also enables the LP solver to benefit from smaller incremental changes between consecutive faces. 

\section{Computational Experiments} \label{sec:experiments}
We evaluate the following branching strategies on problems mentioned in Section \ref{sec: Data generation} by comparing the number of branching to the optimal number of branching computed using the  dynamic programming based algorithm presented in Section \ref{sec:dpalgo}. Recall the definition of $\Delta^+_j, \Delta^-_j$ from Section \ref{sec:intro}. 
\begin{itemize}
\item \textbf{SB-L}: Strong branching where branching candidates are compared using a convex combination of the maximum and minimum. The scoring function used is $$ \text{score}_L(j) = \frac{5}{6} \min(\Delta^+_j, \Delta^-_j) + \frac{1}{6} \max(\Delta^+_j, \Delta^-_j).$$
\item \textbf{SB-P}: Strong branching where the product of improvements is used to score branching candidates. Thus, the scoring function is, $$\text{score}_P(j) = \max(\Delta^+_j, \epsilon) \cdot \max(\Delta^-_j, \epsilon),$$ where $\epsilon > 0$ is small. For the computations presented in this section, $\epsilon$ is chosen to be $10^{-4}.$
\item \textbf{Most-Inf}: Most infeasible branching where the variable with fractional value closest to 0.5 is selected for branching
\item \textbf{Rand}: Branching variable is randomly chosen from variables with fractional values.
\end{itemize}
\subsection{Instance Generation} \label{sec: Data generation}
In this section, we discuss the problems considered for computational experiments and the details of generating randomized instances.

\subsubsection{General Packing and Covering IPs}
We consider packing problems, covering problems and general problems with multiple covering and packing inequalities. Let $I_p$ and $I_c$ represent the set of indices corresponding to packing and covering constraints respectively. The general formulation is as follows,
\begin{eqnarray*}
&\textup{max}& {c^{\top}}{x} \\
&\textup{s.t.} &a_i^{\top} x \leq b_i  \ \forall \ i \in I_p \\
&& a_i^{\top} x \geq b_i \ \forall \ i \in I_c \\
&& x \in \{0, 1\}^n.
\end{eqnarray*}
For the experiments in this paper, $n =  20$  is considered. The constraint matrix is generated randomly while incorporating sparsity. Each element $a_{ij}$ is $0$ with probability $p = 0.25$. Otherwise, a random integer selected uniformly from the set $\{1, 2, \dots, 200\}$ otherwise. The capacity parameter $b_i$ is $50\%$ of the sum of weights of the constraint, rounded down to integer value. The objective function is dependent on the number of packing and covering constraints as follows, 
\begin{itemize}
\item $P5$ is a purely packing type problem with $5$ constraints ($I_p = 5, I_c = 0$). We thus choose a non negative vector for the objective function, and each component $c_i$, is independently selected from the set $\{1, 2, \dots, 200\}$ with uniform probability.
\item $C5$ is a purely covering type problem with $5$ constraints ($I_p = 0, I_c = 5$). Each component of the objective, $c_i$, is independently selected from the set $\{-200, \dots, -2, -1\}$ with uniform probability.
\item$ G22$ is a general MILP with $2$ packing-type and $2$ covering-type constraints ($I_p~=~I_c~=~2$). Each component of the objective, $c_i$, is independently selected from the set $\{-100, \hdots, 100\}$ with uniform probability.
\end{itemize}

\subsubsection{Lot-sizing Problem and Variants}
We consider the classical lot-sizing problem of determining production volumes while minimizing production cost, fixed cost of production and inventory holding cost across the planning horizon. The MILP model for the lot-sizing problem with $n$ time periods is as follows,
\begin{eqnarray}
& \textup{min} & \sum_{i=1}^{n} \left( c_i \, x_i + f_i \, y_i \right ) +   \sum_{i=1}^{n -1} h_i \, s_i  \label{lotsize:obj} \\
&\textup{s.t.}& {x_1}{ = s_1 + d_1}{} \label{lotsize:flow0} \\
&& s_{i-1} + x_{i} = d_{i} + s_{i}  \ \forall \ i \in \{2, \dots, n\}  \label{lotsize:flow} \\
&& s_{n-1} + x_n = d_n \label{lotsize: flown}\\
&& x_i \le \left(\sum_{j=i}^n d_j \right) \, y_i \ \forall \ i \in \{1, \hdots, n\} \label{lotsize: x-y} \\
&& x \in  \mathbb{R}^n_+, \ s \in  \mathbb{R}^{n -1}_+, \  y \in  \{0, 1\}^n.
\end{eqnarray}
where variable $x_i$ is the quantity produced in period $i$ and $y_i$ is a binary variable with value $1$  if production occurs in period $i$ and $0$ otherwise. Lastly variable $s_i$ is the inventory at the end of period $i$. Unit cost of production for each period, $c_i$, is independently and uniformly sampled from $\{1, \hdots, 10\}$, fixed cost of production, $f_i$, from $\{200, \hdots, 400\}$ and unit inventory holding cost $h_i$ from $\{1, \hdots, 10\}$. Similarly, the demand for each time period, $d_i$ is independent and uniformly distributed in $\{0, \hdots, 100\}$. In our computational experiments, we consider problems with $n=17$.
\paragraph*{Capacitated Lot-sizing}
In the capacitated lot-sizing problem, the maximum quantity produced in every time period is constrained. Let parameter $u_i$ denote the upper-bound on the quantity that can be produced in period $i$. In our experiments, $u_i$ are sampled uniformly and independently from $\{150, \hdots, 250\}$. All other parameters are generated in the same was as the uncapacitated problem. Equation (\ref{lotsize: x-y}) is thus replaced with the constraint, 
\begin{equation*}
x_i \le u_i \, y_i, \qquad \forall i \in \{1, \hdots, n\}
\end{equation*}
\paragraph*{Big-bucket Lot-sizing}
The last variant of lot-sizing problem that we consider is the big-bucket lot-sizing problem where resources are shared amongst multiple products~\cite{quadt2008capacitated}. We do not consider unit cost of production here. On the other hand, set up time and processing time are considered to be constrained. 
The following are the parameters of the corresponding MILP model,\\ \\
\begin{tabular}{l l}
$P$ & Number of products, $\mathcal{P} = \{1, \hdots, P\}$ \\
$T$ & Number of time periods, $\mathcal{T} = \{1, \hdots, T\}$ \\
$f_i^p$ & Fixed cost of producing product $p$ in period $i$ \\
$h_i^p$ & Inventory holding cost of product $p$ in period $i$ \\
$t_i^{s, p}$ & Set up time of product $p$ in period $i$ \\
$t_i^{u, p}$ & Processing time per unit of product $p$ in period $i$ \\
$C_i$ & Time available in period $i$ \\
$z^p$ & Initial inventory of product $p$ at the beginning of planning horizon. \\
\end{tabular}
\newline \newline The variables used to model the problem are following,\\ \\
\begin{tabular}{l l}
$x_i^p$ & Quantity product $p$ produced in period $i$ \\
$s_i^p$ & Quantity of product $p$ stored as inventory at the end of period $i$ \\
$y_i^p$ & Binary variable indicating if product $p$ was produced in period $i$ ($y_i^p = 1$ if $x_i^p > 0$)\\
\end{tabular}
\newline \newline The MILP model used for the big-bucket lot-sizing problem is described below.
\begin{eqnarray*}
&\textup{min} &  \sum_{i \in \mathcal{T}} \sum_{p \in \mathcal{P}} \left(f^p_i \, y^p_i + h^p_i \, s^p_i \right) \label{bblotsize: obj} \\
& \textup{s.t.} & s^p_0 = z^p, \ p \in \mathcal{P}  \label{bblotsize: s_init}\\
&& s^p_T = 0, \ p \in \mathcal{P} \label{bblotsize: s_fin} \\
&& s^p_{i-1} + x^p_{i}  = d^p_{i} + s^p_{i},  i \in \mathcal{T},  p \in \mathcal{P} \label{bblotsize: flow} \\
&& x^p_i  \leq \left(\sum_{j=i}^n d^p_j \right) \, y^p_i, i \in \mathcal{T}, p \in \mathcal{P} \label{bblotsize: x-y} \\
&& \sum_{p \in \mathcal{P}} \left( t^{s,p}_i \, y^p_i + t^{u,p}_i \, x^p_i \right) \leq C_i,  i \in \mathcal{T} \label{bblotsize: cap} \\
&& x^p_i \in  \mathbb{R}_+, s^p_i \in  \mathbb{R}_+, y^p_i \in  \{0, 1\},\quad i \in \mathcal{T}, p \in \mathcal{P}.
\end{eqnarray*}
In our experiments, we consider problems with 9 time periods ($T=9$) and 2 products ($P=2$). Parameters corresponding to demand, fixed cost of production and inventory holding cost are generated as described in the context of previous variants. Set up time for a product in a each period, $t^{s, p}_i$ is independently sampled from $\{200, \hdots, 500\}$ with equal probability, unit processing time $t^{u, p}_i$ from $\{1, \hdots, 10\}$ and time limitation $C_i$ from $\{1000 P, \hdots, 2000 P\}$. Initial inventory for each product $z^p$ is similarly sampled from $\{0, \hdots, 200\}$.

\subsubsection{Minimum Vertex Cover}
The vertex cover problem on graphs $G = (V, E)$ concerns with identifying the smallest possible subset $V'$ of $V$, such that for every edge in $E$, at least one of its endpoints is included in $V'$. It is formulated as follows, 
\begin{eqnarray*}
&\textup{min}& \sum_{i \in V} x_i \\
&\textup{s.t.}& x_i + x_j \ge 1, \qquad \forall (i, j) \in E \\
&& x \in \{0,1\}^{|V|}
\end{eqnarray*}
We generated random graphs for the vertex cover problem using the \emph{Erd{\H o}s-R{\'e}nyi model}, i.e., the graph $G = (V, E)$  is constructed using 
two parameters; $N$ indicating the number of nodes and $p$ representing the probability with which each edge of the complete graph on $V$ is independently included in the set $E$. For our computational experiments, we consider graphs with $N=20$ and $p=0.75.$



\subsubsection{Chance Constraint Programming - Multiperiod Power Planning}

We consider the problem of expanding the electric power capacity~\cite{bertsimas1997introduction} of a state by constructing new coal and nuclear power plant to meet with the electricity demand of the state for a time horizon of $T$ periods. Once constructed, coal plants are operational for $T_c$ time periods and nuclear plants for $T_n$ time periods. Legal restrictions mandate that fraction of nuclear power should be at most $f$ of the total capacity. Capital cost incurred for the construction of coal and nuclear power plants operational from the beginning of time period $t$ are $c_t$ and $n_t$ respectively per megawatt of power capacity. The objective is to minimize the total capital cost of construction. Further, the demand is stochastic and defined on probability space $(\Omega, \mathcal{F}, \mathbb{P})$. Approximated by the sample approximation approach, $\Omega = \{\omega_1, \hdots, \omega_N\}$ is assumed to be a finite sample space. It is required that the probability of the event where the demand is not satisfied be at most $\epsilon$. The deterministic formulation of the problem as an MILP is as follows, 
\paragraph*{Parameters\\ \\}
\begin{tabular}{ l l }
$T$ & Number of time periods, $\mathcal{T} = \{1, \hdots, T\}$ \\
$N$ & Size of sample space $\Omega$,  $\mathcal{N} = \{1, \hdots, N\}$\\
$c_t$ & Capital cost per MW for coal plant operational from period $t$\\  
$n_t$ & Capital cost per MW for nuclear plant operational from period $t$ \\
$T_c$ & Lifespan of a coal power plant \\ 
$T_n$ & Lifespan of a nuclear power plant \\
$f$ & Upper bound on nuclear capacity as a fraction of total capacity \\
$e_t$ & Electric capacity from existing resources in period $t$ \\
$d^i_t$ & Electricity demand (in MW) in period t corresponding to outcome $\omega_i$\\
$p_i$ & Probability of outcome $\omega_i$\\
$\epsilon$ & Upper bound on the probability that the demand is satisfied
\end{tabular}

\paragraph*{Variables\\ \\}
\begin{tabular}{l l}
$x_t$ & Power capacity (in MW) of coal plants operational starting at period $t$ \\
$y_t$ & Power capacity (in MW) of coal plants operational starting at period $t$ \\
$u_t$ & Total coal power capacity (in MW) in period $t$ \\
$v_t$ & Total nuclear power capacity (in MW) in period $t$ \\
$z_i$ & Binary variable indicating if demand is not satisfied for outcome $\omega_i$
\end{tabular}

\paragraph*{Model - CCP Power Planning}
\begin{eqnarray}
&\textup{min}& \sum_{t=1}^{T} (c_t \, x_t + n_t \, y_t)  \label{ccp power: obj} \\
& \textup{s.t.} & u_t = \sum_{\textup{max}\{1, \, t-T_c+1\} }^t x_s, \quad t \in \mathcal{T} \label{ccp power: sum coal} \\
&& v_t = \sum_{\textup{max}\{1, \, t-T_n+1\}}^t y_s,\quad  t \in \mathcal{T} \label{ccp power: sum nucl} \\
&& (1-f) \, u_t - f \, v_t  \le f \, e_t, \quad t \in \mathcal{T} \label{ccp power: nuc lim}  \\
&&u_t + v_t \ge (d^i_t - e_t)(1-z_i), \quad t, i \in \mathcal{T} \times\mathcal{N} \label{ccp power: demand} \\
&& \sum_{i = 1}^n z_i \, p_i \le \epsilon \label{ccp power: epsilon} \\
&& x_t, y_t, u_t, v_t \in \mathbb{R}_+, \quad t \in \mathcal{T} \\
&& z_i  \in \{0, 1\}, \quad i \in \mathcal{N}.
\end{eqnarray}
The objective function (\ref{ccp power: obj}) minimizes total capital expenditure of constructing power plants. Equations (\ref{ccp power: sum coal}) and (\ref{ccp power: sum nucl}) compute total coal and nuclear power capacity for a given time period from active power plants based on their lifespan. Equation (\ref{ccp power: nuc lim}) enforces the regulatory limit on nuclear capacity is satisfied. Equations (\ref{ccp power: demand}) and (\ref{ccp power: epsilon}) ensure that the outcomes for which the demand is not satisfied has probability at most $\epsilon$.

For the experiments in Section \ref{sec:experiments}, we generate instances with $T = 30$ and $N = 20$ which corresponds to 30 time periods and 20 outcomes in sample space. Parameters $d^i_t$ are independent random integers uniformly distributed 
in $\{300, \hdots, 700\}$. Similarly, $c_t$ are uniformly distributed in $\{100, \hdots, 300\}$ and $n_t$ in $\{100, \hdots, 200\}$. Electric capacity from existing resources for the first period, $e_1$ is a random integer in $\{100, \hdots, 500 \}$. Capacity from existing resources is then modelled to decline by a factor of $r$ in every subsequent period where $r$ is uniformly distributed in $[0.7, 1)$, ie. $e_i = e_1\,r^{i-1}$. Lifespan of coal and nuclear power plants are $15$ and $10$ periods respectively. Nuclear capacity is constrained to be at most 20\% of total capacity. All outcomes in $\Omega$ are equally probable with $p_i = 0.05$ and demand satisfiability can be violated with a probability of at most $0.2$.

\subsubsection{Chance Constraint Programming - Portfolio Optimization}
We consider the probabilistically-constrained portfolio optimization problem for $n$ asset types, approximated by the sample approximation approach~\cite{pagnoncelli2009computational}, where the constraint on overall return may be violated for at most $k$ out of the $m$ samples. The MILP formulation of this problem is as follows:
\begin{eqnarray*}
&\textup{min} &\sum_{i = 1}^n x_i \\
&\textup{s.t.} & a_i^T x + r z_i\ge r, \quad \forall i = 1, \hdots, m \\
&& \sum_{i=1}^m z_i \le k \\
&& x \in \mathbb{R}^n_+, \, z \in \{0,1\}^m. 
\end{eqnarray*}
We sample scenarios from the distribution presented in~\cite{qiu2014covering}, which is shown to be computationally difficult to solve. Each component of the constraint matrix, $a_{ij}$ is independently sampled from a uniform distribution in $[0.8, 1.5]$ and $r$ is equal to $1.1$. For our experiments, we set $n=30$, $m=20$ and $k=4$.

\subsubsection{Stable Set Polytope on Bipartite Graph With Knapsack Constraint}
Stable set polytope corresponding to a bipartite graph is known to have a totally unimodular matrix and thus integral vertices. We consider the problem of solving a maximization problem on the stable set polytope of a bipartite graph where the optimal extreme point is cut off with a knapsack constraint with the same coefficients as the objective function. The details of model are explained below.  A bipartite graph $G = (N, E)$ is generated for a $n$ nodes and $m$ edges as follows. The partition of $N = N_1 \cup N_2$ is generated, by setting $N_1$ as a randomly selected subset of $\lfloor f n \rfloor$ nodes and $N_2$ as its complement, where $f$ is sampled from a uniform distribution over $[0.3, 0.5]$. From the $N_1 \times N_2$ possible edges, $m$ are then randomly selected to form set $E$. Lastly, each component $c_i$ of the objective function is a randomly selected integer from 1 to 50. In our experiments, we consider instances with $20$ nodes and $30$ edges.
Let $\delta^*$ be the objective function value of the corresponding maximum weight stable set problem, 
\begin{eqnarray}
&\textup{max}& \sum_{i=1}^n c_i \, x_i  \label{stable set ip}\\
&\textup{s.t.}& x_i + x_j \le 1,\quad \forall i, j \in E \label{stable set ip2}\\
&& x \in \{0, 1\}^m .\label{stable set ip3}
\end{eqnarray}
The following constraint is then added to (\ref{stable set ip})- (\ref{stable set ip3}),
\begin{equation}
\sum_{i=1}^n c_i \, x_i \le r\delta^*
\end{equation}
where $r$ is uniformly distributed in $[0.75, 0.9]$.

\subsection{Results}\label{sec:compres}
We present some preliminary results of various variable selection rules in comparison to optimal tree. $\textup{SB-L}$ stands for strong-branching with linear score, $\textup{SB-P}$ stands for strong-branching with product score, $\textup{Most infeasible}$ is selecting the variable with fractionality closest to $0.5$, and $\textup{Random}$ just selects a variable randomly from a list of fractional variables. 

Figure~\ref{fig:gmratio} shows a comparison of branching strategies based on the ratio of geometric mean of BB tree sizes to geometric mean of optimal tree sizes over all instances of the problem.

Figure~\ref{fig:performance1} shows performance profiles. The way to read the plots is the following: Consider the green curve for multi row packing problem. The point $(80\%, 0.3)$ means that in the case of using the most infeasible rule for multi row packing problem, $0.3\times 100 = 30$ instances out of 100, have branch-and-bound trees that have at most 80\% more nodes than the optimal branch-and-bound tree. The x-axis represents percentage differences in size of the BB tree in comparison to the optimal BB tree. The y-axis is the cumulative frequency of instances. 

Tables~1 through 12 present the number of nodes for all the instances we tested and all the variable selection rules. 

\subsubsection{Discussion On Findings}
It is clear from the Fig.~\ref{fig:gmratio}-\ref{fig:performance1} and Tables 1 through 12 that random always performs the worst and as expectated strong-branching is always the best. Furthermore, as seen in Fig.~\ref{fig:gmratio}, the geometric mean of tree size for strong-branching remains less than twice of the optimal tree for all problems considered in this study. The overall geometric mean of optimal tree size of instances across all problems is 42.65, whereas the same for SB-L, SB-P, most infeasible branching and random branching is 61.47, 61.09, 90.19 and 145.37 respectively. While the performance of two variants of strong-branching is comparable on all problems considered in this study, SB-P dominates over SB-L on 8 out of 10 problems, although by a small margin.

Strong branching is near optimal for the lot-sizing instances and performs exceptionally well on all other lot sizing variants as well. This is an interesting result considering that the tree sizes of the other two strategies can be significantly larger for almost all instances as seen in Fig~\ref{fig:scatter_lotsizing}. Thus, this indicates that very large branch-and-bound trees exist for the problem, but strong-branching is indeed effective at finding an effective branching strategy. This is in contrast to the problem of Stable Set on Bipartite Graphs with additional knapsack constraint, where all strategies have comparable performance as seen in Fig~\ref{fig:scatter_TUcutoff}. In comparison with the optimal tree, its performance on general packing and covering IPs is also relatively poor and is worst on Chance Constraint Programming Portfolio Optimization problem. The performance of random branching is also worst for CCP problems, thus making these an interesting class of problems for which better variable selection rules could be discovered.


\bibliographystyle{plain}
\bibliography{bib}	

\newpage

\appendix
\section{Graphs and tables}

\begin{figure}[ht]
     \begin{subfigure}[ht]{0.48\linewidth}
         \centering
         \includegraphics[width=\linewidth]{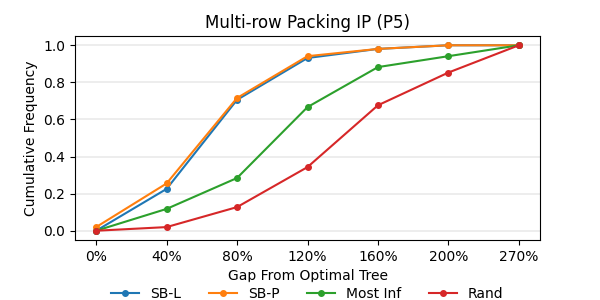}
         \caption{}
         
     \end{subfigure}
     \begin{subfigure}[ht]{0.48\linewidth}
         \centering
         \includegraphics[width=\linewidth]{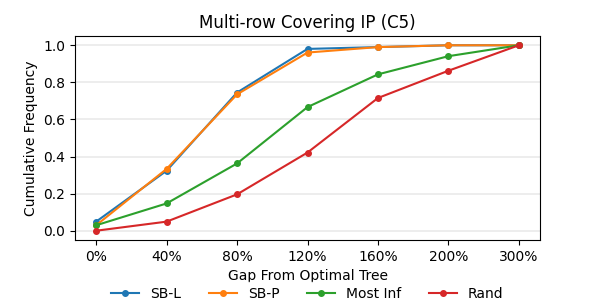}
         \caption{}
         
     \end{subfigure}
     \begin{subfigure}[ht]{0.48\linewidth}
         \centering
         \includegraphics[width=\linewidth]{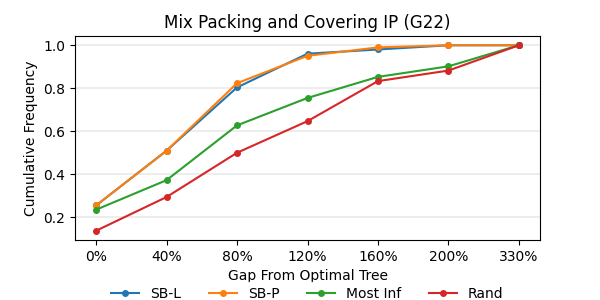}
         \caption{}
         
     \end{subfigure}
     \begin{subfigure}[ht]{0.48\linewidth}
         \centering
         \includegraphics[width=\linewidth]{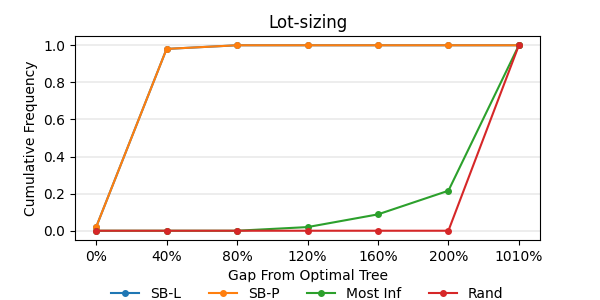}
         \caption{}
         
     \end{subfigure}
     \begin{subfigure}[ht]{0.48\linewidth}
         \centering
         \includegraphics[width=\linewidth]{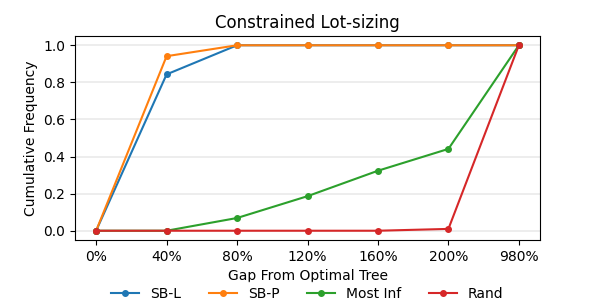}
         \caption{}
         
     \end{subfigure}
     \begin{subfigure}[ht]{0.48\linewidth}
         \centering
         \includegraphics[width=\linewidth]{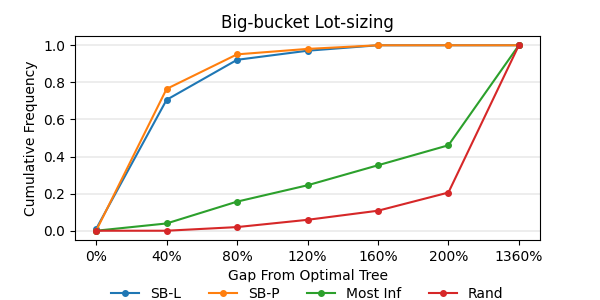}
         \caption{}
     \end{subfigure}
     
     \caption{Cumulative frequency of instances in terms of optimality gap for different branching strategies}
     
     \end{figure}
    
    \begin{figure}[ht]\ContinuedFloat
     
     \begin{subfigure}[ht]{0.48\linewidth}
         \centering
         \includegraphics[width=\linewidth]{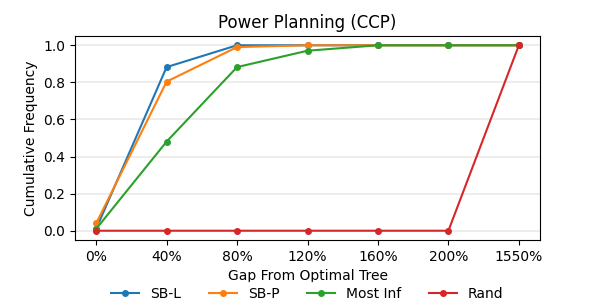}
         \caption{}
    
    \end{subfigure}
     \begin{subfigure}[ht]{0.48\linewidth}
         \centering
         \includegraphics[width=\linewidth]{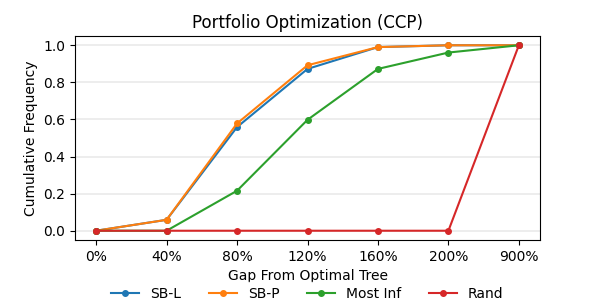}
         \caption{}
         
     \end{subfigure}
     \begin{subfigure}[ht]{0.48\linewidth}
         \centering
         \includegraphics[width=\linewidth]{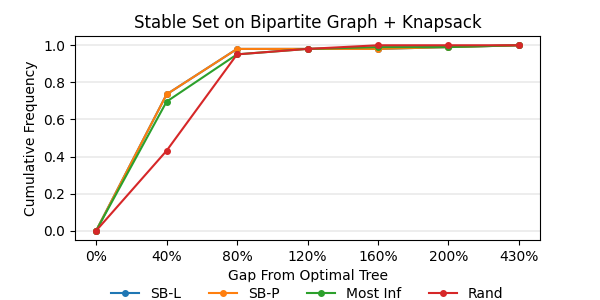}
         \caption{}
         
     \end{subfigure}
     \begin{subfigure}[ht]{0.48\linewidth}
         \centering
         \includegraphics[width=\linewidth]{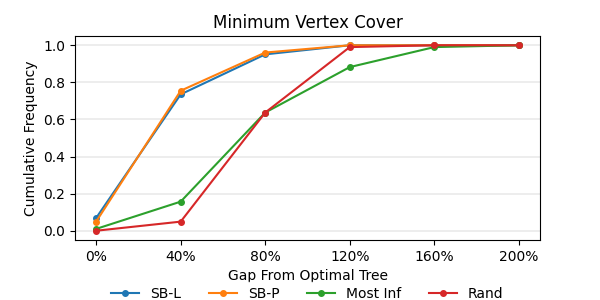}
         \caption{}
         
     \end{subfigure}
         
         
         
     
        \caption{Cumulative frequency of instances in terms of optimality gap for different branching strategies, cont'd.}
       \label{fig:performance1}  
\end{figure}

\begin{figure}
     \begin{subfigure}[h]{0.48\linewidth}
         \centering
         \includegraphics[width=0.9\linewidth]{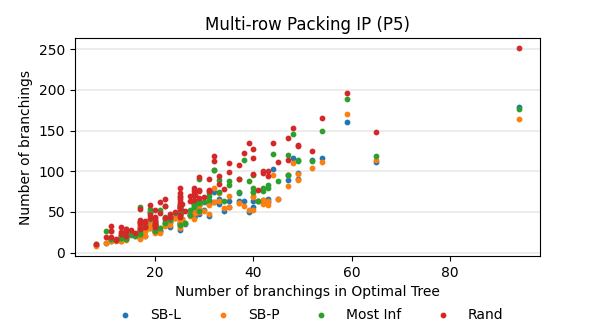}
         \caption{}
         
     \end{subfigure}
     \begin{subfigure}[h]{0.48\linewidth}
         \centering
         \includegraphics[width=0.9\linewidth]{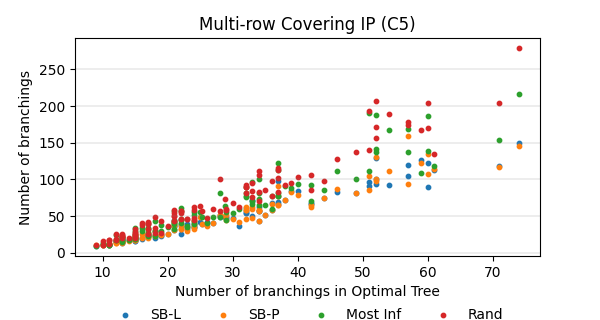}
         \caption{}
         
     \end{subfigure}
     \begin{subfigure}[h]{0.48\linewidth}
         \centering
         \includegraphics[width=0.9\linewidth]{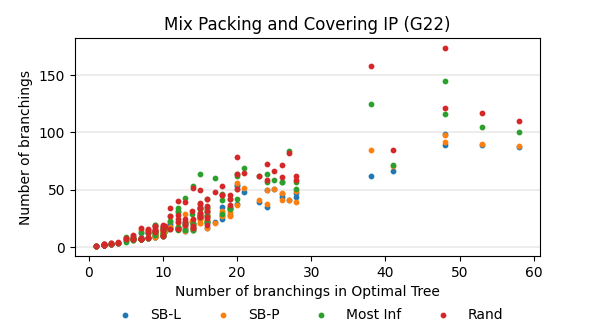}
         \caption{}
         
     \end{subfigure}
     \begin{subfigure}[h]{0.48\linewidth}
         \centering
         \includegraphics[width=0.9\linewidth]{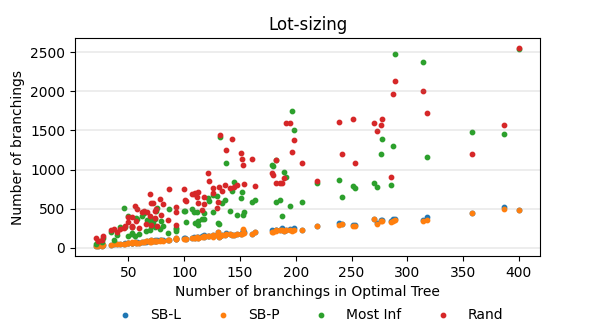}
         \caption{}\label{fig:scatter_lotsizing}
         
     \end{subfigure}
     \begin{subfigure}[h]{0.48\linewidth}
         \centering
         \includegraphics[width=0.9\linewidth]{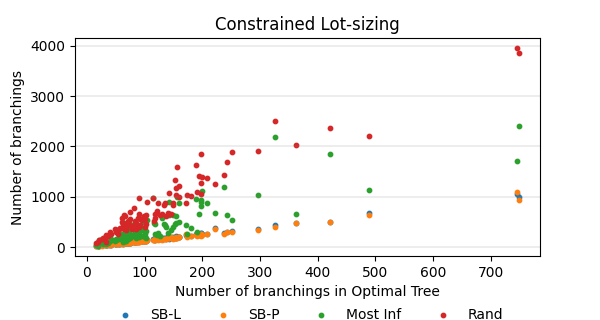}
         \caption{}\label{fig:scatter_constrained_LS}
         
     \end{subfigure}
     \begin{subfigure}[h]{0.48\linewidth}
         \centering
         \includegraphics[width=0.9\linewidth]{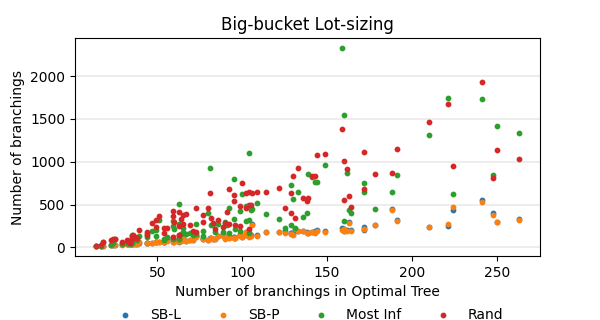}
         \caption{}\label{fig:scatter_BBLS}
         
     \end{subfigure}
     \begin{subfigure}[h]{0.48\linewidth}
         \centering
         \includegraphics[width=0.9\linewidth]{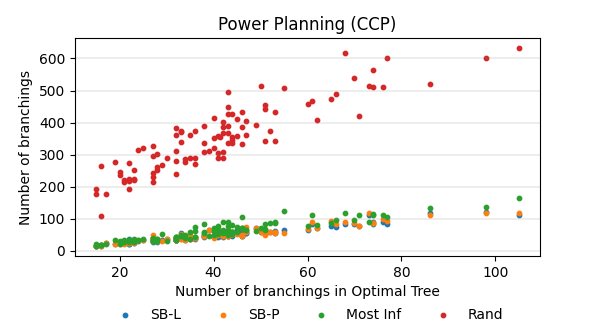}
         \caption{}
    
    \end{subfigure}
     \begin{subfigure}[h]{0.48\linewidth}
         \centering
         \includegraphics[width=0.9\linewidth]{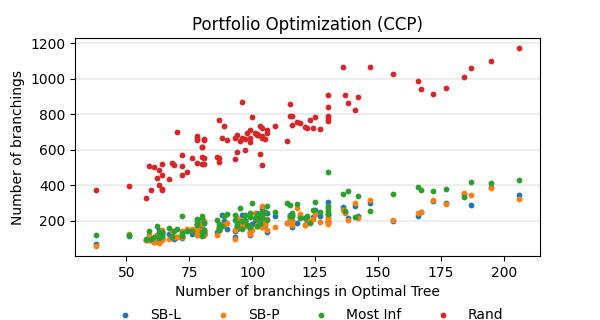}
         \caption{}
         
     \end{subfigure}
     \begin{subfigure}[h]{0.48\linewidth}
         \centering
         \includegraphics[width=0.9\linewidth]{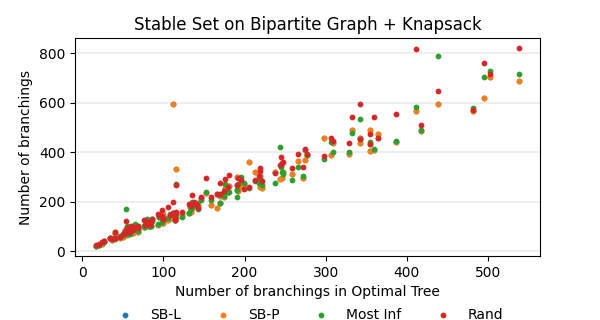}
         \caption{} \label{fig:scatter_TUcutoff}
         
     \end{subfigure}
     \begin{subfigure}[h]{0.48\linewidth}
         \centering
         \includegraphics[width=0.9\linewidth]{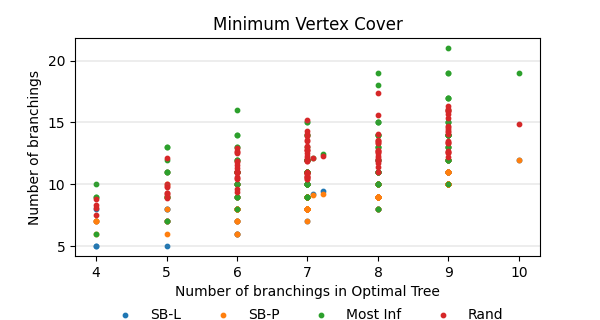}
         \caption{}
         
     \end{subfigure}\\
    
    \caption{Number of branching operations for all instances in comparison with the optimal tree}
    \label{fig:scatterplot_allinstances}  
\end{figure}

\small
\begin{table}\small
\begin{center}
\caption{Computational Results for Packing IP (P5) instances, $n=20, \; |I_p| = 5, \;|I_c| = 0, \; p=0.25$}
\begin{adjustwidth}{-0.5in}{-0.5in}

\end{adjustwidth}
\end{center}
\end{table}
%
%


\end{document}